\newtheorem{theorem}{Theorem}[section]
\newtheorem{lemma}[theorem]{Lemma}
\newtheorem{proposition}[theorem]{Proposition}
\theoremstyle{definition}
\newtheorem{assumption}{Assumption}
\title[Optimal sensor placement in the WC4D-Var framework]
{Optimal sensor placement under model uncertainty in the weak-constraint 4D-Var framework} 
\author[Alen Alexanderian, Hugo D\'iaz, Vishwas Rao and Arvind K.\ Saibaba]{Alen Alexanderian, Hugo D\'iaz, Vishwas Rao and Arvind K.\ Saibaba}
\subjclass{Primary: 65M32, 62K05; Secondary: 65F40, 65F60.}
\keywords{Sensor placement, Data Assimilation, Model Uncertainty, Weak-constraint 4D-Var, Column subset selection}
\thanks{
The work of AA was supported in part by the US
National Science Foundation (NSF) grant DMS-2111044. 
HD and AKS were supported by the Department of Energy, Office of Science, Advanced Scientific Computing Research (ASCR) Program through the award DE-SC0023188. AKS was also supported by the NSF, in part, through the award DMS-1845406.  VR was  supported by the U.S. Department of Energy, Office of Science, ASCR Program under contracts DE-AC02-06CH11357 and DE-SC0023188.}
\begin{document}
\maketitle

\centerline{\scshape
Alen Alexanderian$^{{\href{mailto:aalexan3@ncsu.edu}{\textrm{\Letter}}}*1}$
Hugo D\'iaz$^{{\href{mailto:hsdiazno@ncsu.edu}{\textrm{\Letter}}}1}$
Vishwas Rao$^{{\href{mailto:vhebbur@anl.gov}{\textrm{\Letter}}}2}$
and Arvind K.\ Saibaba$^{{\href{mailto:asaibab@ncsu.edu}{\textrm{\Letter}}}1}$
}

\medskip

{\footnotesize
 \centerline{$^1$Department of Mathematics, North Carolina State University, USA}
} 

\medskip

{\footnotesize
 \centerline{$^2$Argonne National Laboratory, USA}
}

\bigskip

 \centerline{(Communicated by Handling Editor)}


\begin{abstract}
In data assimilation, the model may be subject to uncertainties and errors. The weak-constraint data assimilation framework enables incorporating model uncertainty in the dynamics of the governing equations.
We propose a new framework for near-optimal sensor placement in the weak-constrained setting. 
This is achieved by first deriving a design criterion based on the expected information gain, which involves the Kullback-Leibler divergence from the forecast prior to the posterior distribution.
An explicit formula for this criterion is provided, assuming that the model error and background are independent and Gaussian and the dynamics are linear.
We discuss algorithmic approaches to efficiently evaluate this criterion through randomized approximations.
 To provide further insight and flexibility in computations,
 we also provide alternative expressions for the criteria.
We provide an algorithm to find near-optimal experimental designs using column subset selection, including a randomized algorithm that avoids computing the adjoint of the forward operator.
Through numerical experiments in one and two spatial dimensions, we show the effectiveness of our proposed methods.
\end{abstract}


\section{Introduction}

Accurately estimating the state of complex dynamical systems is essential in fields such as meteorology, oceanography, and climate modeling. Data assimilation techniques, which combine noisy observations with mathematical models, play a critical role in reconstructing and predicting the state of these systems. These methods have been extensively studied and developed, with notable contributions in both theoretical and applied contexts and advancements in geophysical and climate-related applications \cite{1994Evensen, Freitag,GHIL1991141, Gratton, Xiang-Yu}. The Strong-Constraint Four-Dimensional Variational (SC4D-Var) method assumes perfect model dynamics. 
However, in many applications, the mathematical model does not perfectly represent the underlying physics, and a failure to account for this can be detrimental to the data assimilation results \cite{ngodock2017weak}.
The Weak-Constraint 4D-Var (WC4D-Var) method \cite{ Laloyaux,Tremolet, xu2007}   addresses these shortcomings by accounting for uncertainties in both the model and the observations.  This uncertainty is variously referred to as model error, model misspecification, or model uncertainty, depending on the field of study, though they all reflect  limitations or inaccuracies in the forward model. However, throughout this paper we refer to these as model errors.

In many applications of interest, the data is in the form of sensor measurements. A question related to data assimilation is: How should one optimally collect data under physical or budgetary constraints? This falls under the purview of optimal experimental design (OED). 
To enable optimal sensor placement, we assume that we have identified $n_s$ candidate sensor locations  and we have to choose $k$ optimal sensor locations. There are many challenges that need to be addressed to determine the optimal placement of these sensors.
 First, we need to consider a notion of optimality---a design criterion---that accounts for the model errors. 
Second, even a single evaluation of the design criterion is computationally expensive. And finally, exploring the space of  
$\binom{n_s}{k}$
possibilities  is computationally infeasible in practical settings.  

The goal of this article is to address all these aforementioned challenges. To the first point, while standard approaches for OED ignore model error, we develop a notion of optimality in the WC4D-Var framework and study the behavior of the optimality criterion as the model error vanishes. To the second point, we develop several efficient methods for evaluating the objective function, and to the third point,  we build on our recent work on OED using column subset selection~\cite{eswar2024bayesian} and adapt it to the specific criterion that we derive and time-dependent problems. The specific contributions of this article are listed next, followed by a discussion on related work in the literature.

\subsubsection*{Contributions}

This article introduces a novel framework for near-optimal sensor placement within the context of the WC4D-Var method. By explicitly incorporating model errors, we develop a method to optimize sensor locations for enhanced data assimilation. Our key contributions include:

\begin{enumerate}
    \item {\bf New optimality criterion}: In  \Cref{sec:eig}, we derive a novel OED criterion for the WC4D-Var framework that quantifies the expected information gain (EIG) from the forecast to the posterior distribution. We provide a closed-form expression for this criterion when the forecast and the posterior distributions are Gaussian, and which involves the log-determinant of the  prior and posterior covariance matrices.
    \item {\bf Convergence to standard SC4D-Var criterion}:
    We show that as the model  error vanishes, the optimality criterion for WC4D-Var converges to the criterion used in the standard SC4D-Var formulation, ensuring consistency between the methods, see \Cref{ssec:comparison}.
    
    \item {\bf Alternative forms of the criterion}: 
    In  \Cref{section:Alternative_formulations},  we derive alternative formulations of the WC4D-Var EIG criterion (each formulation is equivalent, up to an additive constant, independent of the data). Notably, all these formulations are interconnected through appropriate Schur complements and provide different computational benefits.
    \item{\bf Matrix-free method for evaluating the design criterion}:
    In \Cref{sec:preclanc}, 
    we propose a matrix-free approach for evaluating the WC4D-Var design criterion, based on stochastic trace estimators, and the Lanczos method to enable efficient computation of the different formulations of the criterion.

    \item {\bf Near-optimal sensor placement}: In \Cref{sec:sensor_placement}, we propose a novel approach for near-optimal sensor placement by combining ideas from low-rank tensor approximations with the column subset selection problem, specifically leveraging the Golub-Klema-Stewart (GKS) method.  We introduce a randomized variant of the GKS method that is computationally efficient and can be implemented in a matrix-free and adjoint-free method. This makes the methods applicable to a wide variety of scenarios, especially involving legacy implementations.

    \item {\bf Numerical experiments}: In \Cref{sec:Numerical_experiments}, we conduct comprehensive numerical experiments in both one and two spatial dimensions to evaluate the performance of our proposed methods. The one-dimensional setting enables a direct comparison with the optimal sensor placement, which is computationally intractable in higher dimensions due to its combinatorial nature. The numerical experiments show that the algorithms are accurate and computationally efficient.

\end{enumerate}
\subsubsection*{Related work}  
Model  errors arises in various contexts, including statistical inference, Bayesian inverse problems, regression analysis, and data assimilation. In this section, we first review approaches for handling model errors in general, then focus on strategies that specifically address model errors in the context of OED.
In statistical inference, model errors have been extensively studied. Specifically, within the framework of Maximum Likelihood Estimation (MLE), several studies have identified conditions under which the estimator converges to a well-defined limit, even when the assumed probability model is incorrect, see \cite{Halbert} and references therein. A related approach involves the use of the Generalized Bayes framework, or Gibbs posteriors, which can be viewed as a generalization of the data-generating process or likelihood. This approach accounts for model uncertainty by extending the standard Bayesian methodology \cite{Baek_2023}.

Finally, several approaches have been developed to integrate model errors into OED for Bayesian inverse problems. 
One such approach is the Bayesian Approximation Error (BAE), which addresses uncertainties and model discrepancies, such as those introduced by discretization \cite[Chapter 7]{Kaipio:1338003}. A related technique is pre-marginalization, which has been explored in \cite{alexanderian2024optimaldesignlargescalenonlinear} in the context of nonlinear Bayesian inverse problems under model uncertainty. 
For a broader review of OED under model errors, see \cite[Section 6.1]{Huan_Jagalur_Marzouk_2024}. 
These methods enhance model robustness and improve the reliability of experimental designs, aligning with the principles of robust OED \cite{Scarinci2021BayesianIU, ASPREY2002545, attia2023robustaoptimalexperimentaldesign}. Furthermore, within the framework of robust OED,  mixed models, such as those studied in \cite{Tsirpitzi2023}, have proven effective, particularly in regression models with dependent error processes \cite{SacksI, Dette}.

In data assimilation, model and observation errors are addressed using methods like the Ensemble Kalman Filter (EnKF) \cite{Tippett2003EnsembleSR}, WC4D-Var \cite{Freitag}, and hybrid 4DVar–EnKF approaches \cite{Dong, EnKFandHybridGain}.
However, to our knowledge, OED for the WC4D-Var framework has not been explored previously.
%

\section{Background}\label{section:background}

Variational methods for data assimilation aim to integrate information about a dynamical system, usually represented by a PDE model, with observed data. 
We begin by describing the Strong-Constraint 4DVar (SC4D-Var) method in a Bayesian framework in \Cref{ssec:sc4d-var}. 
This method aims to recover the true state at the initial time denoted by $\bmu_0$ (inversion parameter). 
Then, in \Cref{ssec:wc4d-var}, we discuss the Weak-Constraint version (WC4D-Var), which will consider model error in the dynamical system.

\subsection{Strong constraint-4DVar}\label{ssec:sc4d-var}
In SC4D-Var, the state variables $\{\bmu_{\ell}\}_{\ell = 0}^{n_T}$, representing the system's states at discrete time steps, evolve sequentially starting from the initial time $\tau_0=0$. The time steps are ordered as  $\tau_0 \le \dots \le \tau_{n_T}$, and the states follow the discrete dynamical system:
\begin{align}
    \bmu_{\ell + 1} = \Map{\ell}{{\ell + 1}} (\bmu_\ell) \qquad  \mbox{for } 0 \leq \ell \leq n_T-1,
\end{align}
where $\Map{\ell}{{\ell + 1}}$ represents the operator evolving the state from time $\tau_\ell$ to $\tau_{\ell+1}$ and $\bmu_0\in \R^{d_S}$ is the initial condition. 
\begin{assumption}
\label{assumption:data}    
We assume that the data is collected according to 
\begin{align}
    \ylobs = & \> \O_{\ell }\left(\bmu_{\ell}\right) + \bm{r}_{\ell}  \qquad  \mbox{for all } 0 \leq \ell \leq n_T,
\end{align}
where $\bm r_{\ell} 
\sim \mathcal{N}(\bm 0, {\bm R_\ell})$ are independent and $\ylobs \in \R^{n_s}$ for $0\le \ell \le n_T$. Here $n_s$ is the number of sensors.
\end{assumption}

The goal of SC4D-Var is to recover the initial condition $\bmu_0$ from the discrete measurements 
$\{ \ylobs\}_{\ell = 0}^{n_T}$. 
As is the prevalent approach, the background distribution is assumed to be Gaussian. 
\begin{assumption} \label{assumption:background_distribution}
\textbf{Background distribution}:
The initial condition $\bmu_0$ is assumed to be described by our prior/background knowledge:
\begin{align}
\bmu_0  \sim \mu_{\rm back} := \mathcal{N}( \bmu_0^{b}, \Gb).\label{def:u_0_assumptions}
 \end{align}
\end{assumption}
An application of Bayes' rule gives the posterior distribution $\bm{u}_0| \bm{y}_{0}, \dots, \bm{y}_{n_T}$ with density $\pi(\bm{u}_0| \bm{y}_{0}, \dots ,\bm{y}_{n_T})$ and corresponding measure $\mu_{\rm post}^{\rm{sc}}$. Under Assumptions~\ref{assumption:data}-\ref{assumption:background_distribution}, the posterior density takes the form $\pi(\bm{u}_0| \bm{y}_{0}, \dots, \bm{y}_{n_T}) \propto \exp(- \mathcal{J}_{\text{SC}}(\bm{u}_0))$, where $\mathcal{J}_{\text{SC}}(\bm{u}_0)$ is given by:
\begin{align*}
\mathcal{J}_{\text{SC}}(\bmu_0) = & \>  \frac{1}{2} \left( \bm{u}_0 - \bm{u}^{b}_0 \right)^\top {\Gb^{-1}} \left( \bm{u}_0 - \bm{u}^{b}_0 \right) \\
  & \qquad + \frac{1}{2} 
  \sum_{\ell = 0}^{n_T} (\O_\ell(\bm{u}_\ell) - \ylobs)^\top \bm{R_\ell}^{-1} (\O_\ell(\bm{u}_\ell) - \ylobs).
\end{align*}  
 Note that we have used a non-standard approach by including the term for $\ell = 0$ to facilitate a direct comparison with the WC4D-Var formulation.
 The maximum a posteriori (MAP) estimate of this posterior can be found by solving the optimization problem:
\begin{align*}
\min_{\bmu_0} \mathcal{J}_{\text{SC}}(\bmu_0) &\quad \text{subject to:} \quad \bmu_{\ell + 1} = \Map{\ell}{{\ell + 1}} (\bmu_\ell), \quad 0 \le \ell \le n_T -1.
\end{align*}

To enable analytical expressions for subsequent analysis, we make the following assumption.
\begin{assumption}\label{assumption:forward_model}
    \textbf{Linearity}: We assume that the forward model $\Map{{\ell-1}}{\ell}$ for $1\le \ell \le n_T$ is linear or can be linearized around the background trajectory generated from $\bmu_0^b$. 
    We also assume that the observation operator $\O_\ell$ is linear for $0 \le \ell \le n_T$.
\end{assumption}
According to the assumptions made so far (Assumptions~\ref{assumption:data}, \ref{assumption:background_distribution}, and \ref{assumption:forward_model}), the posterior distribution is Gaussian and is denoted by $\mu_{\rm post}^{\rm sc} = \mathcal{N}(\bmu_{0,sc}^\text{post}, \bm{\Gamma}_{\text{sc,post}})$. The expressions for the posterior mean $\bmu_{0,sc}^\text{post}$ and the posterior covariance $\bm{\Gamma}_{\text{sc,post}}$ take the form 
\begin{align*}
    \bm{\Gamma}_{\text{sc,post}}^{-1} &= 
    {\Gb^{-1}}  
  +  \sum_{\ell = 1}^{n_T}
  (\O_\ell \Map{0}{{\ell}})\t 
  \bm{R_\ell}^{-1}
  (\O_\ell \Map{0}{{\ell}}),\\
 \bmu_{0,sc}^\text{post} &= \bm{\Gamma}_{\text{sc,post}} \left( \sum_{\ell = 1}^{n_T} (\O_\ell \Map{0}{{\ell}})\t\bm{R_\ell}^{-1} \ylobs
   +  \Gb^{-1} \bmu^{b}_0 \right).
   \end{align*}

\subsection{WC4D-Var data assimilation}\label{ssec:wc4d-var}
The WC-4DVar generalizes SC-4DVar by adding a model-error term to address discrepancies between observations and model forecasts. We assume that this error term is additive:
\begin{equation}
\begin{aligned}
    \bmu_{\ell+1} = & \> \Map{\ell}{{\ell+1}}\!\!(\bmu_\ell) + \bm\eta_{\ell+1} && \mbox{for } 0 \leq \ell \leq n_T-1,
\end{aligned} \label{eq:WC_system}
\end{equation}
where $\bm\eta_{\ell}$ represent the model errors.
From this point forward, as in SC4D-Var, we adopt Assumptions~\ref{assumption:data}, \ref{assumption:background_distribution}, and \ref{assumption:forward_model}. Additionally, we introduce the following assumption regarding the model error:
\begin{assumption}\label{assumption:model_measurement_error}
\textbf{Model and measurement errors}: We assume that the  model errors are Gaussian and are independent; that is, we assume: 
 \begin{align}
\begin{aligned}
\bm \eta_{\ell}  \sim 
\mathcal{N}(\bm 0,\bm Q_{\ell}) \quad  \mbox{ for }  \quad 1 \leq \ell \leq n_T. 
\end{aligned}\label{def:noise_assumptions}
\end{align}
\end{assumption}
The assumption of independence among vectors $\{\bm{\eta}_\ell\}$ is made for computational reasons, since it leads to block-diagonal covariance matrices.  We note that it is a common practice in the literature to assume that the model errors are Gaussian \cite{carrassi2018data, fisher2011weak}.
In particular, we define $\bm\eta = \bmat{\bm{\eta}\t_1 & \dots & \bm{\eta}\t_{n_T} }\t$ 
and $\bm{r} = \bmat{\bm{r}\t_0 & \dots & \bm{r}\t_{n_T}}\t$.

We emphasize that the Gaussianity and independence assumptions in \Cref{assumption:model_measurement_error} serve as a practical starting point for the development of scalable sensor placement methods in the WC4D-Var setting. These assumptions simplify the statistical model, making the optimization problems tractable. 
More expressive models of uncertainty, such as those arising from deep generative models or invertible neural networks \cite{pmlr-v97-behrmann19a}, could in principle better capture non-Gaussian, correlated errors. However, such methods typically demand more data and greater computational resources. Our present focus is thus on building a robust foundation under Gaussian assumptions, with a view toward incorporating more general modeling frameworks in future work.

By Assumptions \ref{assumption:data} and \ref{assumption:model_measurement_error}, we have  
$\bm\eta \sim \mathcal{N}
\left(\bm 0,\Gq \right)$ and 
$\bm r \sim 
\mathcal{N}\left(\bm 0,\Gmeas \right)$, where $$\Gq:=
\text{blkdiag}(
\bm Q_{1},\dots, \bm Q_{n_T})\quad \text{and }\quad  
\Gmeas:= \text{blkdiag}(\bm R_0,\dots, \bm R_{n_T}).$$
Similarly, we define $\bmu = \bmat{\bmu\t_0 & \dots & \bmu\t_{n_T} }\t$ and 
$\bmyo = \bmat{(\bmyo_0)\t & \dots&  (\bmyo_{n_T})\t}\t$.

Based on Assumptions~\ref{assumption:data},~\ref{assumption:background_distribution}, and~\ref{assumption:model_measurement_error}, the posterior distribution has density  $\pi(\bmu|\bmyo) \propto \exp(-\mathcal{J}_{\rm{WC}}(\bmu))$, where 
\begin{align}
\begin{aligned}\label{def:Jwc}
  \mathcal{J}_{\text{WC}}(\bmu) &=    \frac{1}{2} \left( \bm{u}_0 - \bm{u}^{b}_0 \right)^\top {\Gb^{-1}} \left( \bm{u}_0 - \bm{u}^{b}_0 \right) \\
  & \qquad + 
  \frac{1}{2} \sum_{\ell = 0}^{n_T} (\O_\ell(\bm{u}_\ell) - \ylobs)^\top \bm{R_\ell}^{-1} (\O_\ell(\bm{u}_\ell) - \ylobs) \\
  & \qquad+ 
  \frac12\sum_{\ell = 1}^{n_T} (\bm{u}_{\ell} -  \Map{\ell-1}{{\ell}}(\bmu_{\ell-1}))\t \bm{Q}_{\ell}^{-1} (\bm{u}_{\ell} -  \Map{\ell-1}{{\ell}}(\bmu_{\ell-1})).  
\end{aligned}\end{align}
Further simplification is possible if we include~\Cref{assumption:forward_model}. 
In particular, in this case, the posterior distribution becomes Gaussian. 
To derive the appropriate expressions, we first introduce additional notation, outline the forecast prior, and present the form of the posterior distribution.

\subsubsection*{Evolution operator} 
 We define the evolution operator
\begin{align}\label{eq:EvolProp}
    \Map{j}{\ell}:= \Map{j}{{j+1}}  \circ \cdots  \circ     \Map{{\ell-1}}{{\ell}} \quad \mbox{for } 0 \leq j \leq \ell  \leq n_T.
\end{align}
By convention,  $\Map{\ell}{\ell}$  is the identity matrix. 
We also define  
 \[\bm p : =  
    \bmat{
         \bm u_0\\
         \bm\eta
     }, \quad \text{where} \quad \bm \eta:=
    \begin{bmatrix}
        \bm \eta\t_1&\bm \eta\t_2& \ldots& \bm \eta\t_{n_T}
    \end{bmatrix}\t.\]
 We introduce a mapping $\L$ from the state space to the model error space
 as follows:
\begin{align}
\begin{aligned}
   \L:\R^{(n_T+1) d_S}& \rightarrow \R^{(n_T+1) d_S},\qquad \bmu&\mapsto  \L(\bmu)=\bm p,
\end{aligned} \label{eq:def_L}
\end{align}
where $d_S$ represents the spatial dimension. Additionally, we define the total number of degrees of freedom 
\begin{equation}
    \label{eqn:N}
    N_d := (n_T+1)\cdot d_S.
\end{equation}

The operator \(\L\), defined in \eqref{eq:def_L}, and its inverse can be expressed as follows in terms of the evolution operator $\Map{j}{\ell}$:
\begin{align}
\begin{aligned}
\L&=
\begin{pmatrix*}[c]
 \I          & \bm 0       & \bm 0  &\cdots  & \bm 0\\
 -\Map{0}{1} & \I          & \bm 0  &\cdots  &\bm 0\\
 \bm 0       & -\Map{1}{2} & \I &  &  \\
  \vdots     & \cdots      & \ddots  &  \ddots & \bm 0  \\
   \bm 0     & \cdots      & \bm 0  &  -\!\!\!\!\!\!\Map{n_T-1}{n_T} &  \I \\
\end{pmatrix*},\\
  \L^{-1} &=
  \begin{pmatrix*}[c]
    \I & & & &  \\
    \Map{0}{1}& \I & & & \\
    \Map{0}{2}& \Map{1}{2} & \I& &  \\
    \vdots& \vdots& &\ddots &  \\
    \Map{0}{n_T}& \Map{1}{n_T}& \cdots& \Map{n_T-1}{n_T}& \I
\end{pmatrix*}\!.
\end{aligned}\label{lemma:L}
\end{align}
 This relation can be verified by direct computation using \eqref{eq:EvolProp}. 

From the definitions, it is evident that applying $\L$ to a vector is preferable over its inverse, and the action of $\L^{-1}$ can be done recursively. Additionally,  $\det \L = 1$ is a useful property for later purposes. For further discussion, we adopt the following notation: $\O := \text{blkdiag}(\O_0,\O_1, \dots, \O_{n_T}) $ and $N_m = n_s\cdot(n_T+1)$.
\subsubsection*{Forecast prior} 
The forecast prior is a distribution obtained by propagating the initial background distribution through the discrete dynamical system. Specifically, the initial state,
$\bmu_0$, is assigned the background distribution
$ \mathcal{N}(\bmu_0^b,\Gb)$,
as described in~
 \Cref{assumption:background_distribution}.
  The subsequent states are then generated by pushing forward 
  $\bmu_0$
 according to the model~\eqref{eq:WC_system}.
Under \Cref{assumption:model_measurement_error},  
$\bm p$ has the distribution $\bm p \sim \mathcal{N}( \bm\mu_{\rm mod},\bm\Gamma_{\rm mod})$ where
 \begin{align}
 \bm\mu_{\rm mod} : = \pmat{ \bmu_0^b  \\ \bm 0}, \qquad \bm\Gamma_{\rm mod} := \begin{pmatrix}
 \Gb &   \bm 0 \\[0.1cm]
  \bm 0 &   \Gq
\end{pmatrix}.\label{def:G_mod}
\end{align}
 
 Therefore, from the relation $\bm u = \L^{-1} \bm p$, it follows the forecast prior has the form $\bm u\sim \mathcal{N}\left( \uprior,\Gpr
\right)$,
where
\begin{equation}
\uprior  =  
\L^{-1}\bm \mu_{\rm mod}, \quad \mbox{ and } \quad
    \Gpr
    = 
    \L^{-1}
    \bm\Gamma_{\rm mod}
    \L\mt.\label{def:distribution_uprior}
\end{equation}
 We denote the density of the 
 resulting Gaussian measure by $\mu_{\rm pr}$. 
 The corresponding density is denoted by $\pi_{\rm pr}$.

\subsubsection*{Posterior distribution} Finally, we are ready to present the posterior distribution obtained in the WC4D-Var context. Under Assumptions~\ref{assumption:data}, \ref{assumption:background_distribution}, \ref{assumption:forward_model}, and \ref{assumption:model_measurement_error}, a straightforward application of Bayes rule gives the posterior distribution $\bmu| \bmyo$, which is Gaussian, denoted by  $\mu_{\rm post} :=\mathcal{N}\left(\bmu_{post}, \Gpost \right)$, where
\begin{equation}\label{def:distribution_upost}
    \upost = \Gpost \left( \O\t \Gmeas^{-1} \bmyo + \Gpr^{-1} \uprior \right),  \qquad \Gpost^{-1} =    \O\t \Gmeas^{-1} \O + \Gpr^{-1},
\end{equation}
where we have used the notation $\Gpr^{-1}= \L\t \Gmod^{-1}  \L$, cf. \eqref{def:distribution_uprior}.

\subsection{Requisite linear algebra concepts}
Here we outline key linear algebra concepts, focusing on non-singular Hermitian matrices  and the 
log-determinant  function, both essential for defining the sensor placement optimality criterion in the next section. The material that appears in this section can be found in standard references such as~\cite{bhatia1997matrix,horn2013matrix}. 

\subsubsection*{Loewner order}
Partial order between positive semidefinite symmetric matrices. We  denote $\mathbf{A} \preceq \mathbf{B}$, indicating that $\mathbf{B} - \mathbf{A}$ is symmetric and positive semidefinite.
\subsubsection*{Sylvester's determinant theorem}  
Also known as the Weinstein–Aronszajn identity, this theorem states that for any matrices $\mathbf{A}$ of size $m \times n$ and $\mathbf{B}$ of size $n \times m$,     
\begin{align} \label{def:SylvestersDeterminant}
\det(\mathbf{I}_m + \mathbf{AB}) = \det(\mathbf{I}_n + \mathbf{BA}),
\end{align}
where $\mathbf{I}_m$ and $\mathbf{I}_n$ are identity matrices.
This holds since $\mathbf{AB}$ and $\mathbf{BA}$ share the same nonzero eigenvalues. We leverage this property, particularly for  $\mathbf{B} = \mathbf{A}\t$.
\subsubsection*{Singular value decomposition} 
The singular value decomposition (SVD) of a matrix \(\A \in \R^{m \times n}\), with \(r \leq  \mathrm{rank}(\A)\), can be expressed as:
\begin{align}\label{def:SVD}
    \A = 
    \begin{bmatrix}
        \U_r & \U_\perp
    \end{bmatrix}
    \begin{bmatrix}
        \bm{\Sigma}_r & \\[0.1cm] & \bm{\Sigma}_\perp
    \end{bmatrix}
    \begin{bmatrix}
        \V\t_r  \\[0.1cm]  \V\t_\perp
    \end{bmatrix},
\end{align}
where   $\bm \Sigma_r =\mathrm{diag}(\sigma_1,\ldots,\sigma_r)$ contains   the \(r\) largest singular values of \(\A\), and $\U_r \in \R^{m \times r}$ and $\V_r \in \R^{n \times r}$ are the matrices whose columns are  left and right singular vectors associated with the singular values in \(\bm{\Sigma}_r\), respectively.

The truncated SVD of $\A$  with \(r \leq \mathrm{rank}(\A)\) provides a  \(\mathrm{rank}\!-r\) approximation of \(\A\) given by  
\begin{align}\label{def:TSVD}
\A\approx \U_r \bm{\Sigma}_r \V\t_r.
\end{align}
\subsubsection*{Kronecker products}   
The Kronecker product between an $m\times n$ matrix $\mathbf{B} = (b_{ij})$ and a $p\times q$ matrix $\mathbf{S}$, denoted $\mathbf{B} \otimes \mathbf{S}$, is a block matrix of size $mp \times nq$:
\[
\mathbf{B} \otimes \mathbf{S} = \begin{pmatrix}
b_{11}\mathbf{S} & \cdots & b_{1n}\mathbf{S} \\
\vdots & \ddots & \vdots \\
b_{m1}\mathbf{S} & \cdots & b_{mn}\mathbf{S}
\end{pmatrix}.
\]

\subsubsection*{Matrix functions}
Let $\mathbf{E}$ be a $d \times d$ real  symmetric matrix with eigendecomposition $\mathbf{E} = \mathbf{U} \Lambda \mathbf{U}\t$, where $\Lambda = \mathrm{diag}(\lambda_1, \ldots, \lambda_d)$, and let $f: \sigma(\mathbf{E}) \to \R$ be a continuous function. The matrix function \( f(\mathbf{E}) \) is then defined as:  
\[
f(\E)= \U \mathrm{diag}(f(\lambda_1), \ldots, f(\lambda_d))\U\t.
\]
In particular, the trace of \( f(\mathbf{E}) \) is given by: 
\begin{align}
\tr f(\E) := \sum_{j=1}^d f(\lambda_j).\label{def:trace_f(E)}
\end{align}
For a positive definite matrix $\E$, we write $\logdet(\E) = \log[\det(\E)]$. Next, we present two useful results about the log-determinant function.
\begin{proposition}
Let \(\E\) be a non-singular symmetric matrix of size \(d \times d\). 
Then, the following identity holds:
\[
\log(|\det(\E)|) = \mathrm{tr}(\widetilde{\log}(\E)),
\]
where \(\widetilde{\log}(x) := \log(|x|)\).
\end{proposition}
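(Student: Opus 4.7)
The plan is to reduce the identity to an elementary statement about the eigenvalues of $\E$ by exploiting the spectral theorem, which is available since $\E$ is symmetric. First I would write the eigendecomposition $\E = \U \Lambda \U\t$ with $\Lambda = \mathrm{diag}(\lambda_1,\ldots,\lambda_d)$ and $\U$ orthogonal; non-singularity of $\E$ guarantees $\lambda_i \neq 0$ for all $i$, so $\widetilde{\log}(\lambda_i) = \log|\lambda_i|$ is well defined for every $i$, which justifies applying the matrix function construction from \eqref{def:trace_f(E)} to $f = \widetilde{\log}$.

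Next I would handle the two sides separately. For the left-hand side, since $\det$ is multiplicative and $\det\U = \pm 1$, we have $\det(\E) = \prod_{i=1}^d \lambda_i$, hence
\[
\log|\det(\E)| = \log\prod_{i=1}^d |\lambda_i| = \sum_{i=1}^d \log|\lambda_i| = \sum_{i=1}^d \widetilde{\log}(\lambda_i).
\]
For the right-hand side, by the definition of the matrix function given just before the statement, $\widetilde{\log}(\E) = \U\, \mathrm{diag}(\widetilde{\log}(\lambda_1),\ldots,\widetilde{\log}(\lambda_d))\,\U\t$, and by cyclicity of the trace together with \eqref{def:trace_f(E)},
\[
\tr\bigl(\widetilde{\log}(\E)\bigr) = \sum_{i=1}^d \widetilde{\log}(\lambda_i).
\]
Comparing the two expressions yields the claim.

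There is no real obstacle here; the only subtlety is making sure that $\widetilde{\log}$ is continuous on $\sigma(\E)$, which is exactly where non-singularity of $\E$ is used (it ensures $0 \notin \sigma(\E)$, so the absolute-value-logarithm is continuous on the spectrum and the matrix function is well defined).
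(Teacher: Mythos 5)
Your proof is correct. It rests on the same foundation as the paper's argument---the spectral theorem for the symmetric matrix $\E$ and the observation that non-singularity keeps $0$ out of $\sigma(\E)$ so that $\widetilde{\log}$ is continuous on the spectrum---but the execution differs slightly: you compute both sides directly as $\sum_{i=1}^d \log|\lambda_i|$ from the eigendecomposition, whereas the paper reduces the claim to the standard identity $\logdet \B = \tr(\log \B)$ for positive definite matrices by applying it to $\B = (\E^2)^{1/2} = |\E|$, whose eigenvalues are $|\lambda_1|,\ldots,|\lambda_d|$. Your route is more self-contained and elementary (no external identity needed); the paper's is slightly slicker in that it offloads the computation onto a cited fact and makes explicit that the statement is just the positive-definite case applied to the matrix absolute value. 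Both are complete, and your handling of the one genuine subtlety---where non-singularity is used---matches the paper's.
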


\begin{proof}
Since \(\E\) is symmetric, it is diagonalizable by a unitary matrix.
Furthermore, since \(0 \notin \sigma(\E)\), \(\widetilde{\log}\) is continuous in the spectrum of \(\E\). 
The proof then follows from the identity
$
\logdet ~\B =\tr(\log(\B)), 
$
applied to the positive definite  matrix 
$\B = ({\E\t\E})^{\frac 12}=(\E^2)^{\frac 12}=|\E|$.
\end{proof}

\begin{lemma}\label{lemma:logdet_MN}
Let \( \bm M \) and \( \bm N \) be positive semidefinite Hermitian matrices in \( \mathbb{C}^{n \times n} \), and suppose \( \bm M \geq \bm N \) in the Loewner order. Then, the following inequality holds:
\begin{align}
   0\leq  \logdet(\I + \bm M) - \logdet(\I + \bm N) \leq \logdet(\I + \bm M - \bm N).
\end{align}
\end{lemma}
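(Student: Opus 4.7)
My plan is to reduce both inequalities to the elementary fact that $\logdet(\I + \bm X) \geq 0$ for $\bm X \succeq 0$, combined with the preservation of Loewner order under congruence. The workhorse is the symmetric factorization
\[
\I + \bm M = (\I+\bm N)^{1/2}(\I + \bm Z)(\I+\bm N)^{1/2}, \qquad \bm Z := (\I+\bm N)^{-1/2}(\bm M - \bm N)(\I+\bm N)^{-1/2},
\]
which, after taking log-determinants, gives $\logdet(\I + \bm M) - \logdet(\I + \bm N) = \logdet(\I + \bm Z)$.

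For the lower bound, the hypothesis $\bm M \succeq \bm N$ makes $\bm Z$ positive semidefinite, so every eigenvalue of $\I + \bm Z$ is at least one and $\logdet(\I + \bm Z) \geq 0$ is immediate.

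For the upper bound, set $\bm D := \bm M - \bm N \succeq 0$. I would apply Sylvester's determinant identity \eqref{def:SylvestersDeterminant} with $\bm A = (\I+\bm N)^{-1/2}\bm D^{1/2}$ and $\bm B = \bm D^{1/2}(\I+\bm N)^{-1/2}$ to flip the congruence inside:
\[
\logdet(\I + \bm Z) = \logdet(\I + \bm D^{1/2}(\I+\bm N)^{-1}\bm D^{1/2}).
\]
Since $\bm N \succeq 0$ gives $(\I + \bm N)^{-1} \preceq \I$, conjugating by the Hermitian square root $\bm D^{1/2}$ preserves the Loewner order and yields $\bm D^{1/2}(\I+\bm N)^{-1}\bm D^{1/2} \preceq \bm D$. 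Then Loewner monotonicity of $\logdet$ on positive semidefinite matrices (justified via the same factorization trick as in the lower bound) closes the inequality $\logdet(\I + \bm Z) \leq \logdet(\I + \bm D)$, which is the upper bound.

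\textbf{Main obstacle.} The only real subtlety is the Sylvester flip that converts the two-sided sandwich $(\I + \bm N)^{-1/2}\bm D(\I+\bm N)^{-1/2}$ into the form $\bm D^{1/2}(\I+\bm N)^{-1}\bm D^{1/2}$ that is amenable to a direct Loewner-order comparison against $\bm D$. Without this rearrangement, the comparison with $\I + \bm D$ would require operator monotonicity of inversion applied in the wrong direction. Everything else is standard manipulation with positive semidefinite matrices and congruence.
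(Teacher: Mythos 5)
Your proof is correct. Note that the paper does not actually prove this lemma; it simply cites \cite[Lemma 9]{D-Optimal_2018}, so there is no in-paper argument to compare against. Your self-contained argument is sound: the congruence $\I + \bm M = (\I+\bm N)^{1/2}(\I+\bm Z)(\I+\bm N)^{1/2}$ correctly reduces the difference of log-determinants to $\logdet(\I+\bm Z)$, and the lower bound follows since $\bm Z \succeq 0$. You have also correctly identified the one genuinely delicate point: the naive comparison $(\I+\bm N)^{-1/2}\bm D(\I+\bm N)^{-1/2} \preceq \bm D$ is \emph{false} in general (a two-sided conjugation by a contraction need not decrease a noncommuting PSD matrix in the Loewner order), so the Sylvester flip to $\bm D^{1/2}(\I+\bm N)^{-1}\bm D^{1/2}$ is not a cosmetic step but the essential one, and after the flip the comparison against $\bm D$ is a legitimate congruence of the order $(\I+\bm N)^{-1}\preceq \I$. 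Combined with the monotonicity of $\bm X \mapsto \logdet(\I+\bm X)$ on PSD matrices (which, as you say, follows from the same factorization), this closes the upper bound. The argument buys the reader a short, elementary, and fully self-contained justification where the paper offers only a pointer to the literature.
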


\begin{proof}
 This result can be found in \cite[Lemma 9]{D-Optimal_2018}.
 \end{proof}


\section{Expected information gain and the D-optimal criterion}\label{sec:eig}
In this section, we derive a new criterion for OED in the WC4D-Var setting. 
The criterion is based on the expected information gain (EIG) and is proposed in \Cref{ssec:eig4dvar}. 
In \Cref{section:Alternative_formulations}, we propose  alternative formulations of the criterion and compare the computational costs of each approach. We discuss its connections to the corresponding criterion for SC4D-Var in \Cref{ssec:comparison}.

\subsection{Expected information gain for WC4D-Var}\label{ssec:eig4dvar}

In this section, we consider an OED  criterion for the WC4D-Var formulation, based on the concept of EIG. 
More specifically,  the criterion we propose is defined by taking the expectation of the Kullback-Liebler (KL) divergence from  the forecast prior to the posterior distribution, averaged over all possible experimental data.
This is closely related to the D-optimality criterion \cite{Alen, Huan_Jagalur_Marzouk_2024, Kiefer1959OptimumED}. More precisely, we define:  
\begin{align}\label{eqn:eig}
\begin{aligned}
 \EIG(\mu_{\text{post}} \| \mu_{\text{pr}})&:=
  \bm E_{\mu_{\text{pr}}} \left\{   \bm E_{\bm y|\bm u} \left\{  \DKL (\mu_{\text{post}} \| \mu_{\text{pr}})\right\} \right\},
 \end{aligned}
\end{align}
where  $\DKL(\mu_1 \| \mu_2)$ represents the KL divergence 
of $\mu_1$ from $\mu_2$ (with
 corresponding densities  $\pi_1(\bmu)$ and $\pi_2(\bmu)$). If the two measures are  absolutely continuous with respect to a common reference measure, here the Lebesgue measure, the KL divergence can be expressed as 
\begin{equation}\label{eqn:kldef}
D_{\text{KL}}( \mu_1 || \mu_2) = \int \log\left(\frac{d\mu_1(\bmu)}{d\mu_2(\bmu)}\right) \, d\mu_1(\bmu) 
= 
\int \log\left(\frac{\pi_{1}(\bmu)}{\pi_{2}(\bmu)}\right) \,\pi_{1}(\bmu)  d{\bmu}.
\end{equation}

It should be noted that the expression~\eqref{eqn:eig} applies to non-Gaussian prior and posterior distributions. However, under Assumptions~\ref{assumption:data}-\ref{assumption:model_measurement_error},
 we obtain Gaussian prior and posterior distributions for the state, and therefore, we can derive a closed form expression that is the starting point in our numerical investigation.

\begin{proposition}\label{prop:D-opt}
Under Assumptions~\ref{assumption:data}-\ref{assumption:model_measurement_error},  
the EIG for the WC4D-Var method takes the form: 
\begin{align}
\begin{aligned}
\overline{\DKL}(\mu_{\text{post}} \| \mu_{\text{pr}})
&=-\frac{1}{2}\log\det(\Gpost \Gpr^{-1}).
\end{aligned}\label{eq:D_optimal}
\end{align}
\end{proposition}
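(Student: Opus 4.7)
The plan is to substitute the closed form of the KL divergence between two multivariate Gaussians into the definition~\eqref{eqn:eig}, take the expectation over the data, and simplify using the law of total variance.

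First, I would invoke the standard identity for Gaussians, which yields
\begin{equation*}
\DKL(\mu_{\text{post}} \| \mu_{\text{pr}}) = \frac{1}{2}\Big[\tr(\Gpr^{-1}\Gpost) - N_d + \logdet(\Gpr\Gpost^{-1}) + \|\upost - \uprior\|_{\Gpr^{-1}}^2\Big].
\end{equation*}
Under Assumptions~\ref{assumption:data}--\ref{assumption:model_measurement_error}, the posterior covariance $\Gpost$ is independent of the data $\bmyo$ (see~\eqref{def:distribution_upost}), so the first three terms in the bracket are deterministic constants that pull out of the expectation. The only nontrivial step is therefore evaluating the expected value of the quadratic term over the joint distribution of $(\bmu, \bmyo)$.

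To handle that term, I would exploit the fact that $\upost = \mathbb{E}[\bmu \mid \bmyo]$ and that $\Gpost = \mathrm{Cov}(\bmu \mid \bmyo)$ is constant in $\bmyo$. The tower property then yields $\mathbb{E}[\upost] = \uprior$, and the law of total variance gives $\mathrm{Cov}(\upost) = \Gpr - \Gpost$. Consequently,
\begin{equation*}
\mathbb{E}\!\left[\|\upost - \uprior\|_{\Gpr^{-1}}^2\right] = \tr\!\big(\Gpr^{-1}(\Gpr - \Gpost)\big) = N_d - \tr(\Gpr^{-1}\Gpost).
\end{equation*}
Substituting back into the KL expression causes the trace terms and $N_d$ to cancel in pairs, leaving $\EIG(\mu_{\text{post}} \| \mu_{\text{pr}}) = \tfrac{1}{2}\logdet(\Gpr\Gpost^{-1}) = -\tfrac{1}{2}\logdet(\Gpost\Gpr^{-1})$, which is exactly~\eqref{eq:D_optimal}.

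The main subtlety is justifying the law-of-total-variance step in the WC4D-Var setting, where $\bmu$ stacks all time-slice states and the forecast prior itself is the pushforward of the background and model-error distributions through $\L^{-1}$. This reduces to verifying that $(\bmu, \bmyo)$ is jointly Gaussian with the marginals asserted in Section~\ref{ssec:wc4d-var}, which is immediate from Assumptions~\ref{assumption:data}, \ref{assumption:background_distribution}, \ref{assumption:forward_model}, and \ref{assumption:model_measurement_error} combined with the linear relation $\bmu = \L^{-1}\bm p$ from~\eqref{def:distribution_uprior}. Once joint Gaussianity is in hand, the conditional mean and covariance formulas are standard and the calculation above goes through with no further modification.
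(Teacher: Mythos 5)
Your proof is correct and follows essentially the same route the paper defers to: it omits the argument and cites \cite[Section 4]{Alen}, where the computation is exactly this standard one---substitute the closed-form Gaussian KL divergence, note that $\Gpost$ is data-independent, and cancel the trace terms against the expected Mahalanobis term via the tower property and the law of total variance. Your writeup simply supplies the details (including the joint-Gaussianity check for $(\bmu,\bmyo)$) that the paper leaves to the reference.
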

\begin{proof}
The proof of the first equality is similar to that of~\cite[Section 4]{Alen}  and is therefore omitted.
\end{proof}
Although a closed-form expression exists for the EIG, it is not straightforward to compute.
For instance,  the matrices $\Gpost$ and $\Gpr$ cannot be formed explicitly, so $\EIG$ cannot be computed in a straightforward manner. Consequently, we must rely on matrix-free methods to evaluate the objective function, which we discuss in \Cref{sec:preclanc}. 
Dropping the factor of $1/2$, we also define the OED criterion
\begin{equation}
    \label{eqn:criterion}
    \criteria := 2\EIG = \logdet(\Gpr\Gpost^{-1}).
\end{equation}

\subsection{Alternative formulations for EIG}\label{section:Alternative_formulations}

In this section, we present alternative formulations of the WC-4DVAR criterion, and highlight the advantages and disadvantages of these formulations.
The formulations that we propose are based on the relationships between the Schur complement and determinants. 

\subsubsection{Preconditioned formulation}\label{ssec:stronglike}
The first reformulation that we propose resembles the strong constraint formulation, cf. \eqref{eq:DOpt_SC} and \cite[Section 3]{Alen}.  Using the definition of $\Gpost$, cf. \eqref{def:distribution_upost}, and the cyclic property of the determinant, we have:
\begin{align}
\begin{aligned}
\criteria
& = \logdet(\Gpr\Gpost^{-1})\\
&=-\logdet \left( (\Gmod^{-\frac 12}\L)\left( \O\t \Gmeas^{-1} \O +  \L\t \Gmod^{-1} \L \right)^{-1}
                  \left(\L\t \Gmod^{-\frac 12}  \right) \right)\\
&=\> \logdet  \left( \I  + 
\Gmod^{\frac{1}{2}} \L\mt \O\t \Gmeas^{-1}\O
\L^{-1} \Gmod^{\frac{1}{2}}   \right).
\end{aligned}\label{eq:Stable_D-optimality}
\end{align}
We refer to the final expression as the preconditioned formulation, since we can view the posterior covariance preconditioned by the prior covariance matrix. Note that in numerical experiments, we work with a factorization $\Gmod = \G\G\t$ rather than the symmetric square root.
This formulation also bears some resemblance with the EIG obtained in the strong-constraint case, and which is explained in greater depth in \Cref{ssec:sc4d-var}. 

We may simplify the expression for $\criteria$ by defining a new matrix 
\begin{equation}\label{def:A}
\A := \Gmod^{\frac{1}{2}}\L\mt \O\t  \Gmeas^{-\frac 12},
\end{equation}
so that now, $\criteria =\logdet(\I + \A\A\t) = \logdet(\I + \A\t\A)$, by Sylvester's determinant identity, \eqref{def:SylvestersDeterminant}.

In contrast to the preconditioned formulation, we can also consider the unpreconditioned formulation, which takes the form 
\begin{equation}\label{eqn:unprec}
    \criteria =  \logdet(\Gpost^{-1})   + C_U  = \logdet(\O\t \Gmeas^{-1} \O +  \L\t \Gmod^{-1} \L)  + C_U , 
\end{equation}
where the constant $C_U = \logdet(\Gpr)$ is independent of the data and so it is unimportant in the context of OED.

\subsubsection{Saddle-Point formulation-I (SP-I)}
    This approach is referred to as the Saddle formulation for WC4D-Var, see \cite[Section 2]{Gratton}. Consider the matrix:
\begin{align}
\begin{aligned}
\Wb_I & :=
\begin{bmatrix}
  \Gmod & \bm 0 & \L \\
  \bm 0 & \Gmeas & \O \\
  \L\t & \O\t & \bm 0 \\
\end{bmatrix}
=
\begin{bmatrix}
  \I  & \bm 0 \\
 \B &  \I \\
\end{bmatrix}
\left[ 
\begin{array}{c@{}c@{}c}
\Wb 
  & ~\bm 0  \\
  \bm 0  & -\S
\end{array}\right] 
\begin{bmatrix}
  \I  & \B\t \\
 \bm 0 &  \I \\
\end{bmatrix},
 \end{aligned} \label{eq:Saddle1}
\end{align}
where $\S = \O\t \Gmeas^{-1} \O + \L\t \Gmod^{-1} \L$, $\B =[\L\t~\O\t ]\Wb^{-1} $, and  $\Wb=
    \begin{bmatrix}
  \Gmod & \bm 0  \\
  \bm 0 & \Gmeas 
\end{bmatrix}.$ \newline
This matrix  is related to the Hessian of $\mathcal{J}_{\text{WC}}$, cf.  \eqref{def:Jwc}; for further details, see \cite{boyd2004convex, HIGHAM1998261}. 
From   \eqref{eq:Saddle1}, we have $|\det \Wb_I| = \det(\W)\det(\S)$, and therefore
\begin{align*}
    |\det \Wb_I| 
            =&  \det\left(\Gmod \right)\det\left(\Gmeas \right) \det\left( 
             \O\t \Gmeas^{-1} \O + \L\t \Gmod^{-1} \L \right).
\end{align*}
 Then  $\criteria = \log|\det \Wb_I| + C_{I}$, where the constant term $C_{I} = \logdet(\Gmeas)$, can be disregarded in the context of OED since it is independent of the data. The main advantage of this approach is that a matrix-vector product (henceforth, matvec) with $\W_I$ circumvents the need for $\L^{-1}$ and its transpose.
 Similarly, this matrix does not involve the inverse of $\bm\Gamma_{\bm R}$ and square root of $\Gmod$. 

\subsubsection{Saddle-Point formulation-II (SP-II)} 
The second method focused on the linear system is related to Lagrange Multipliers with a penalization term, see  \cite{Benzi}  and \cite[Section 2]{Gatica}. 
Let us consider the matrix
 \begin{align}
  \Wb_{II} =
\begin{bmatrix}
  \Gmod  & \L \\
 \L\t &  -\O\t \Gmeas^{-1} \O \\
\end{bmatrix}= 
\begin{bmatrix}
  \I  & \bm 0 \\
 \L\t \Gmod^{-1} &  \I \\
\end{bmatrix}
\begin{bmatrix}
  \Gmod  & \bm 0 \\
 \bm 0 &  \S \\
\end{bmatrix}
\begin{bmatrix}
  \I  & \Gmod^{-1} \L \\
  \bm 0 &  \I \\
\end{bmatrix},\label{eq:Saddle2}
\end{align}
where $\S= -(\O\t \Gmeas^{-1} \O +\L\t \Gmod^{-1} \L )$ is the Schur complement of $\Gmod$.  
The absolute value of the determinant of  $\Wb_{II}$ satisfies:
\begin{align*}
    |\det \Wb_{II}| 
     &=\> \det\left( \Gmod \right) \det\left( \O\t \Gmeas^{-1}\O + \L\t \Gmod^{-1} \L \right).
\end{align*}
Thus,  $\criteria = \log|\det \Wb_{II}|$, since $\det \L^{-1} =1.$ As with the other saddle point formulation, $\W_{II}$ avoids the inverses of $\L$ and its transpose. 

\subsubsection{Comparison of computational costs}
The above discussion suggests various ways of computing the
EIG, using either the matrix $\A\t\A$, $\W_{I}$, or $\W_{II}$. 
 The cost of a single matvec with these matrices and the spectral properties of the involved matrices is central to matrix-free methods for evaluating the log-determinant. 
The required matvecs in each case are summarized in \Cref{tab:formulations_comparison}, and are further discussed in \Cref{sec:preclanc}.

Let  \(\cost{X}\) represent the computational cost of a matvec with the matrix \(\bm{X}\),  measured in terms of the number of flop operations required.
\Cref{tab:formulations_comparison} outlines the matrix sizes and highlights whether square roots or inverse matrices are involved. 
It is worth noting that $\cost{\L}$ and $ \cost{\L^{-1}} $ are both approximately $(n_T+1)\cost{M}$, assuming the cost of applying the matrix 
$ \Map{\ell}{\ell+1}$ for $0\leq \ell \leq n_T-1$ is constant, denoted by $\cost{M}$. 
Although, $\cost{\L^{-1}}$  requires additional memory access. 
Finally, when  
$\Gmeas$ is diagonal, then  $\cost{\Gmeas}=\cost{\Gmeas^{-1}} = \mathcal{O}(N_m).$

\begin{table}[ht!]
    \centering
    \renewcommand{\arraystretch}{1.14} 
    \begin{tabular}{c|c|c|c|c|c} 
        \textbf{Formulation/Matrix} & $\Gmeas^{-1}$    & $\Gmod^{-1}$ & $\Gmod^{\frac 12}$ & $\L^{-1} / \L\mt$  & \textbf{Matrix Size} \\ \hline
        \textbf{Unpreconditioned} \cref{eqn:unprec}     & \checkmark  & \checkmark & \xmark& \xmark  & $N_d$\\ 
        \textbf{Preconditioned} \eqref{eqn:criterion}   & \checkmark  & \xmark & \checkmark & \checkmark & $N_d$\\
        \textbf{SP-I}  \eqref{eq:Saddle1}     & \xmark & \xmark &  \xmark &\xmark & $N_d+N_m$\\ 
        \textbf{SP-II}  \eqref{eq:Saddle2}   & \checkmark & \xmark & \xmark &  \xmark  & $2N_d$
    \end{tabular}
    \caption{A summary of matrices involved for all the different formulations for computing the WC4D-Var criterion $\criteria$.} 
    \label{tab:formulations_comparison}
\end{table}


\subsection{Experimental design}\label{ssec:exptdesign} 
In data assimilation, data is typically collected in the form of sensor measurements. This is the scenario we consider in this work. 
We assume that there are $n_s$ candidate sensor locations and there is a budget of $k$ sensors, where  $1 \le k \le n_s$. We must find the optimal sensor locations, which means selecting $k$ sensors out of $n_s$. 
We consider the case where sensor locations remain fixed over time, namely $\O := \text{blkdiag}(\O_0, \dots, \O_0) $.

Assuming that the observation noise is diagonal, 
 each column of  $\A$ in  \eqref{def:A},  corresponds to a specific sensor and snapshot. 
To formalize this, we make the following additional assumption:
\begin{assumption}\label{assumption:diag_R} 
We assume that $\bm{R}_\ell = \sigma_\ell^2 \I$ for $0 \le \ell \le n_T$,  meaning that the spatial measurement noise is uncorrelated and has a constant variance.
\end{assumption}
This assumption is made for notational simplicity, but the approaches we describe can handle diagonal noise covariance matrices.

Then, the  matrix \(\A\) can be partitioned into a block of columns, one for each snapshot, as follows:
\begin{align} \label{eq:partitionA}
\A = \bmat{\A_0 & \cdots & \A_{n_T}}\in \R^{N_d \times N_m}, \quad  \A_i \in \R^{N_d\times n_s}.
\end{align}
To encode the selection of \(k\) sensors out of \(n_s\), we introduce a selection matrix \(\S \in \R^{n_s \times k}\), consisting of \(k\) independent columns of the \(n_s \times n_s\) identity matrix. This selection is given by:
\[
\A (\I \otimes \S) = \bmat{\A_0 \S & \cdots & \A_{n_T} \S}\in \R^{N_d \times ((n_T+1)k)}.
\]
Then, the EIG associated with those $k$ sensors is given by:
\begin{align}\label{def:EIG_S}
\criteria(\S) := \logdet( \I + \A (\I \otimes \S) (\I \otimes \S\t)\A\t).
\end{align}

With the selection operator $\S$, the data collection takes the form
\[ \S\t\bmyo_{\ell} = \S\t\O_\ell\bmu_\ell + \S\t\bm{r}_{\ell}, \qquad 0 \le \ell \le n_T.\] 
The OED problem can then be expressed as:

\[ \max_{\S} \criteria(\S) = \logdet( \I + \A (\I \otimes \S) (\I \otimes \S\t)\A\t),\]
where the optimization is performed over $k$ independent columns from the $n_s\times n_s$ identity matrix. 
It is well known that $\criteria(\S)$ is a nondecreasing submodular function of the selected sensor set, see \cite{maio2025submodularity} for a constructive proof. Consequently, the standard greedy algorithm provides a $(1 - e^{-1})$–approximation to the global optimum under a cardinality constraint~\cite{nemhauser}. In \Cref{sec:sensor_placement}, however, we introduce a novel heuristic that, in our numerical experiments, almost always attains equal or higher EIG values than the greedy baseline.

\subsection{Comparison with SC4D-Var}\label{ssec:comparison}
Analogous to the WC4D-Var, we can define the EIG for the SC4D-Var as the expected KL divergence from the background to the strong-constraint posterior distribution. This takes the form  
\begin{align}\label{eqn:eigdef}
\begin{aligned}
 \EIG(\mu_{\text{post}}^{\rm sc} \| \mu_{\text{back}})&:=
  \bm E_{\mu_{\text{pr}}} \left\{   \bm E_{\bm y|\bm u} \left\{  \DKL (\mu_{\text{post}}^{\rm sc} \| \mu_{\text{back}})\right\} \right\},
 \end{aligned}
\end{align}
Following the technique in \cite[Section 4]{Alen}, we can obtain an explicit formula for the EIG in the case  for the SC4D-Var framework
\begin{align}
\EIG[\mu_{\rm post}^{\rm sc} || \mu_{\rm back}]=& \> \frac{1}{2}  
\logdet\left(  \Gb^{\frac 12} \left( {\Gb^{-1}}  
+ \sum_{\ell = 0}^{n_T}
  (\O_\ell \Map{0}{{\ell}})\t 
  \bm{R_\ell}^{-1}
  (\O_\ell \Map{0}{{\ell}})\right) {\Gb^{\frac 12}} \right) \nonumber
  \\  = & \frac{1}{2}\logdet\left(  \I +     
\sum_{\ell = 0}^{n_T}
  (\O_\ell \Map{0}{\ell} \Gb^{\frac 12})\t 
  \bm{R}_\ell^{-1}
  (\O_\ell \Map{0}{\ell}\Gb^{\frac 12})\right)  . \label{eq:DOpt_SC}
\end{align}
A key distinction in the weak-constraint case is that both the forecast prior and posterior distributions are defined over the entire discrete trajectory of the states, as the weak-constraint accounts for model error propagation throughout the trajectory. In contrast, the strong-constraint approach only estimates the initial condition, since the trajectory is determined by the dynamical system.

Similar to WC4D-Var,  the OED criterion  
can be expressed as follows:
\begin{align}
\begin{aligned}
   \criteria^\text{SC} := & \> \logdet\left(  \I +     
\sum_{\ell = 0}^{n_T}
  (\O_\ell \Map{0}{\ell} \Gb^{\frac 12})\t 
  \bm{R}_\ell^{-1}
  (\O_\ell \Map{0}{\ell}\Gb^{\frac 12})\right).
\end{aligned}\label{eq:EIG_SC}
\end{align}
If we incorporate the selection matrix into the EIG criteria for the SC4D-Var we have:
\[ \criteria^\text{SC} (\S) := 
 \logdet\left(  \I +     
\sum_{\ell = 0}^{n_T}
  \left(\S\t\bm{R}_\ell^{-\frac 12} \O_\ell \Map{0}{\ell} \Gb^{\frac 12}\right)^\top 
\left(\S\bm{R}_\ell^{-\frac 12} \O_\ell \Map{0}{\ell} \Gb^{\frac 12}\right)\right).
\] 
In the following proposition, we derive bounds for the difference in the EIG criteria for WC4D-Var and SC4D-Var. 
\begin{proposition}\label{proposition:bound_SCandWCoptCriteria}
The following holds: 
\begin{align} \label{eq:gapEIG_SC_WC}
    0\leq \criteria  -   \criteria^\text{SC} 
      \leq \logdet \left( \I+ \bm Z
\begin{bmatrix}
 \bm 0 &    \\
   &   \Gq
\end{bmatrix}
\bm Z\t\right),
\end{align}
where $\Z := \Gmeas^{-\frac 12} \O \L^{-1}$.
Furthermore, as $\Gq \to \bm{0}$, the WC4D-Var EIG criterion converges to the SC4D-Var EIG criterion, i.e., $\criteria \to \criteria^\text{SC}$.

\end{proposition}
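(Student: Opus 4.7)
The plan is to rewrite both $\criteria$ and $\criteria^{\text{SC}}$ as log-determinants of $\I$ plus a positive-semidefinite matrix of the same size, and then invoke \Cref{lemma:logdet_MN}. Starting from the preconditioned formulation \eqref{eq:Stable_D-optimality} and applying Sylvester's identity \eqref{def:SylvestersDeterminant} to the matrix $\A$ defined in \eqref{def:A}, I would write
\[
\criteria \;=\; \logdet(\I + \A\t\A) \;=\; \logdet\!\left(\I + \Z\,\Gmod\,\Z\t\right),
\]
where $\Z = \Gmeas^{-1/2}\O\L^{-1}$. Since $\Gmod = \text{blkdiag}(\Gb, \Gq)$ by \eqref{def:G_mod}, partitioning $\Z = [\Z_0 \;\; \Z_1]$ conformally gives $\Z\Gmod\Z\t = \Z_0\Gb\Z_0\t + \Z_1\Gq\Z_1\t$.

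The next step is to identify $\criteria^{\text{SC}}$ with $\logdet(\I + \Z_0\Gb\Z_0\t)$. Reading off the first block column of $\L^{-1}$ from \eqref{lemma:L} gives the stacked propagators $[\I,\,\Map{0}{1},\,\ldots,\,\Map{0}{n_T}]^\top$; combined with the block-diagonal structure of $\O$ and $\Gmeas$, the $\ell$-th block row of $\Z_0$ equals $\bm R_\ell^{-1/2}\O_\ell\Map{0}{\ell}$. Applying Sylvester's identity to the SC4D-Var criterion in \eqref{eq:EIG_SC}, namely swapping the inner $d_S\times d_S$ matrix for an outer $N_m\times N_m$ one, matches the sum of outer products with $\Z_0\Gb\Z_0\t$ block-by-block, yielding the claimed identification.

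With both criteria expressed in compatible form, I would apply \Cref{lemma:logdet_MN} to $\bm M = \Z_0\Gb\Z_0\t + \Z_1\Gq\Z_1\t$ and $\bm N = \Z_0\Gb\Z_0\t$; the Loewner inequality $\bm M \succeq \bm N$ holds because $\Z_1\Gq\Z_1\t$ is positive semidefinite. The difference $\bm M - \bm N = \Z_1\Gq\Z_1\t$ is precisely $\Z\,\text{blkdiag}(\bm 0,\Gq)\,\Z\t$, so the two inequalities produced by the lemma reproduce \eqref{eq:gapEIG_SC_WC} exactly. For the vanishing-error limit, as $\Gq \to \bm 0$ we have $\Z_1\Gq\Z_1\t \to \bm 0$, so the upper bound $\logdet(\I + \Z_1\Gq\Z_1\t) \to 0$ by continuity of the log-determinant, and a squeeze argument forces $\criteria \to \criteria^{\text{SC}}$. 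The only non-routine step is the block-column identification of $\criteria^{\text{SC}}$; once that is secured, the remainder is a direct application of \Cref{lemma:logdet_MN}.
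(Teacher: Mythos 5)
Your proposal is correct and follows essentially the same route as the paper's proof: both express $\criteria=\logdet(\I+\Z\Gmod\Z\t)$ and $\criteria^{\text{SC}}=\logdet(\I+\Z\,\mathrm{blkdiag}(\Gb,\bm 0)\,\Z\t)$ via Sylvester's identity and the first block column of $\L^{-1}$, then apply \Cref{lemma:logdet_MN} to the additive split of $\Gmod$ into its $\Gb$ and $\Gq$ blocks. Your block-row identification of $\Z_0$ is exactly the "straightforward calculation" the paper records as a congruence identity, so no further changes are needed.
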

\begin{proof}
We first note that by the Sylvester determinant identity,  $\criteria$ in \eqref{eq:Stable_D-optimality} can be expressed as
\[ \criteria = \logdet(\I + \Z\Gmod\Z\t).\]
Next, a straightforward calculation shows that
\begin{align*}
     \bmat{ \B^{\frac 12} \\ & \bm{0} }\Z\t \Z  \bmat{ \B^{\frac 12} \\ & \bm{0} }    =
    \bmat{
 \displaystyle  \sum_{\ell=1}^{n_T} 
\left(\O_\ell\Map{0}{\ell} \Gb^{\frac 12}\right)\t
 {\bm R}^{-1}_\ell
\O_\ell \Map{0}{\ell} \Gb^{\frac 12}   &   \boldsymbol{0} \\[0.1cm]
  \boldsymbol{0}  &   \boldsymbol{0}
}.
\end{align*}
Once again,  using the Sylvester determinant identity, we have
\[ \criteria^\text{SC} = \logdet\left(\I + \Z \bmat{ \B \\ & \bm{0} }\Z\t\right).\] 
Consider $\Z\Gmod\Z\t$, which can be expressed as a sum
\[ \Z\Gmod\Z\t = \Z \bmat{ \B \\ & \bm{0} }\Z\t + \Z \bmat{ \bm{0}\\ & \Gpr  }\Z\t. \]
Then, \eqref{eq:gapEIG_SC_WC} follows from \Cref{lemma:logdet_MN} with $\M = \Z\Gmod\Z\t$ and $\N = \Z \bmat{ \B \\ & \bm{0} }\Z\t$. 
Finally, the convergence $\criteria \to \criteria^\text{SC}$ as $\Gq$ vanishes follows directly from \eqref{eq:gapEIG_SC_WC} and the continuity of the determinant function.
\end{proof}

This result shows that if the model error is large in the sense of the upper bound in \Cref{proposition:bound_SCandWCoptCriteria}, then the difference between the two EIG criteria can be large. To put it differently, ignoring the model error in SC4D-Var can lead to vastly different criteria for the same design.

\subsubsection*{Optimal sensor placement}
\Cref{proposition:bound_SCandWCoptCriteria} 
assumes data is collected at all the sensors. We now derive an analogous bound between the two criteria for the optimal designs from the two criteria. 
\begin{proposition}\label{proposition:SCandWCoptCriteriaGap}

Let \( \S^*_\text{WC} \) and \( \S^*_\text{SC} \) denote  optimal selection matrices for the WC4D-Var and SC4D-Var EIG criteria, namely:
\begin{align}
    \S^*_\text{WC} &\in \argmax_{\S} \criteria(\S), \quad
    \S^*_\text{SC} \in \argmax_{\S} \criteria^\text{SC}(\S).
\end{align}
Then, the gap between the criteria values for these optimal selections is bounded as:
\begin{align*}
    0\leq \criteria(\S^*_\text{WC}) - \criteria(\S^*_\text{SC}) \leq \logdet \left( \I + (\I \otimes \S^*_\text{WC})\bm{Z} 
    \begin{bmatrix}
    \bm{0} & \\
    & \Gq
    \end{bmatrix} 
    \bm{Z}^\top (\I \otimes \S^*_\text{WC})^\top \right).
\end{align*}

\end{proposition}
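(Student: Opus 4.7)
The plan is to establish the lower bound via the defining optimality of $\S^*_\text{WC}$, and to obtain the upper bound by chaining together a per-design version of \Cref{proposition:bound_SCandWCoptCriteria} (applied at $\S = \S^*_\text{WC}$) with the defining optimality of $\S^*_\text{SC}$ for $\criteria^\text{SC}$. The lower bound is immediate: since $\S^*_\text{WC} \in \argmax_\S \criteria(\S)$, we have $\criteria(\S^*_\text{WC}) \ge \criteria(\S^*_\text{SC})$.

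For the upper bound, I would first establish a per-design analogue of \Cref{proposition:bound_SCandWCoptCriteria}. The key observation is that installing the sensor selection amounts to replacing the observation operator $\O$ by $(\I \otimes \S\t)\O$, so the matrix $\Z$ used in the proof of \Cref{proposition:bound_SCandWCoptCriteria} is effectively replaced by $\Z_\S := (\I \otimes \S\t)\Z$. Applying Sylvester's identity to $\criteria(\S)$ in~\eqref{def:EIG_S} and to the corresponding expression for $\criteria^\text{SC}(\S)$ yields
\begin{align*}
\criteria(\S) &= \logdet\!\left(\I + \Z_\S \Gmod \Z_\S\t\right), \\
\criteria^\text{SC}(\S) &= \logdet\!\left(\I + \Z_\S \bmat{\Gb & \\ & \bm 0} \Z_\S\t\right).
\end{align*}
A verbatim repetition of the proof of \Cref{proposition:bound_SCandWCoptCriteria}, splitting $\Gmod$ into its background and model-error blocks and invoking \Cref{lemma:logdet_MN}, then yields the per-design estimate
\begin{align*}
0 \le \criteria(\S) - \criteria^\text{SC}(\S) \le \logdet\!\left(\I + \Z_\S \bmat{\bm 0 & \\ & \Gq} \Z_\S\t\right)
\end{align*}
valid for every selection matrix $\S$.

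With this in hand, the proof becomes a standard chain of inequalities. Using the per-design estimate at $\S^*_\text{SC}$ together with the optimality of $\S^*_\text{SC}$ for $\criteria^\text{SC}$ gives
\begin{align*}
\criteria(\S^*_\text{SC}) \ge \criteria^\text{SC}(\S^*_\text{SC}) \ge \criteria^\text{SC}(\S^*_\text{WC}),
\end{align*}
so that
\begin{align*}
\criteria(\S^*_\text{WC}) - \criteria(\S^*_\text{SC}) \le \criteria(\S^*_\text{WC}) - \criteria^\text{SC}(\S^*_\text{WC}),
\end{align*}
and the right-hand side is bounded by the per-design estimate evaluated at $\S^*_\text{WC}$, which is precisely the claimed upper bound.

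I expect the main obstacle to be the bookkeeping in the second step, namely verifying that the manipulations in the proof of \Cref{proposition:bound_SCandWCoptCriteria} go through verbatim after inserting the factor $(\I \otimes \S\t)$. The commutation between the block-diagonal $\Gmeas^{-1/2}$ and the selection is immediate under \Cref{assumption:diag_R}, where each $\bm R_\ell$ is a scalar multiple of the identity; otherwise one has to track an extra commutation, but nothing substantive changes. Modulo this bookkeeping, the remainder is the familiar ``oracle'' chain from approximate-optimization arguments.
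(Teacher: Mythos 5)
Your proof is correct and follows essentially the same route as the paper: a per-design analogue of \Cref{proposition:bound_SCandWCoptCriteria} with the selection operator inserted into $\Z$, combined with the optimality of $\S^*_\text{WC}$ and $\S^*_\text{SC}$ to chain the inequalities. If anything, your chain $\criteria(\S^*_\text{SC}) \ge \criteria^\text{SC}(\S^*_\text{SC}) \ge \criteria^\text{SC}(\S^*_\text{WC})$ is stated slightly more carefully than the paper's additive decomposition, which is written as an equality but in fact relies on the same inequality $\criteria(\S^*_\text{SC}) \ge \criteria^\text{SC}(\S^*_\text{SC})$.
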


\begin{proof}
First, let \( \S \) be any selection matrix corresponding to the choice of \( k \) elements  out of \( n_s \). 
Then, the difference between the EIG criteria for the WC4D-Var and SC4D-Var frameworks satisfies the following bound:
\begin{align}\label{eq:gapEIG_SC_WC2}
    0 \leq \criteria(\S) - \criteria^\text{SC}(\S) \leq \logdet \left( \I + (\I \otimes \S)\bm{Z} 
    \begin{bmatrix}
    \bm{0} & \\
    & \Gq
    \end{bmatrix} 
    \bm{Z}^\top (\I \otimes \S)^\top \right),
\end{align}
where \( \bm{Z} := \Gmeas^{-\frac{1}{2}} \O \L^{-1} \).
The inequality \eqref{eq:gapEIG_SC_WC2} follows a similar approach to the proof of  \Cref{proposition:bound_SCandWCoptCriteria}. 
Now we turn to the actual result we wish to prove. The lower bound follows from the optimality of $\S^*_\text{WC}$.  For the upper bound, we decompose 
\[\begin{aligned}
    \criteria(\S^*_\text{WC}) - \criteria(\S^*_\text{SC}) = & \>  \underbrace{\criteria(\S^*_\text{WC}) - \criteria^\text{SC}(\S^*_\text{WC})}_{\equiv \alpha}
     + \underbrace{\criteria^\text{SC}(\S^*_\text{WC}) -  \criteria^\text{SC}(\S^*_\text{SC})}_{\equiv \beta}.
\end{aligned} \]
We have $\beta \le 0$ by  the optimality of $\S^*_\text{SC}$. The upper bound then follows from~\eqref{eq:gapEIG_SC_WC2} applied to $\S^*_\text{WC}$.
The uniform upper bound follows directly from Sylvester's law of inertia. 
\end{proof}
We also have the simpler but looser bound  
$$ 0 \le \criteria(\S) - \criteria^\text{SC}(\S) \leq \displaystyle \logdet \left( \I+ \bm Z
\begin{bmatrix}
 \bm 0 &    \\
   &   \Gq
\end{bmatrix}
\bm Z\t\right),$$
where the upper bound is independent of $\S^*_\text{WC}$. 

Once again, we see that the gap between the two criteria evaluated at the optimal sensor placement (computed for each criterion)
can be large, and the bound may become loose or noninformative if the model error is large.


\section{Evaluating the EIG  criterion for  WC4D-Var}\label{sec:preclanc}
In this section, we develop matrix-free methods to evaluate the objective function $\criteria$. We assume that the selection operator $\S$ that determines the selected sensors, takes the form $\S = \I$ to save on notational complexity, but all the methods extend easily to the case $\S \neq \I$. 

In \Cref{sec:eig}, we presented several different formulations of the EIG criterion. We now show that each formulation can be written (up to a possibly additive constant) in the form $\tr f(\E)$, for a suitable function $f$ and a nonsingular symmetric  matrix $\E \in \R^{d\times d}$.

\begin{enumerate}
    \item Preconditioned formulation: From the relation, 
    \[ \logdet(\I + \A\A\t) = \tr\log(\I + \A\A\t),\]
    we can see that $\criteria = \tr f(\E)$ for $f(x) = \log(x)$ and $\E = \I+\A\A\t$. 
    \item Unpreconditioned formulation: From  \eqref{eqn:unprec} it follows that  
    \[\criteria = \tr f(\E) +C_U,\] where  
    $\E = \Gpost^{-1}, f(x)= \log(x)$, and $C_U = \logdet(\Gpr)$.  
    \item SP-I formulation: From the relation
    \[ \log|\det\W_I| = \tr\log| \W_I|, \]
 we can see that $\criteria = \tr f(\E) + C_I$ for $f(x) = \log(|x|)$,  $\E = \W_I$, and $C_{I} = \logdet(\Gmeas)$. From~\eqref{eq:Saddle1} and Sylvester's inertia theorem~\cite[Theorem 4.5.8]{horn2013matrix}, we can see that $\W_I$ has $2N_d$ positive and $N_m$ negative eigenvalues and is invertible.
    \item SP-II formulation: Similarly to the SP-I formulation,  from  \eqref{eq:Saddle2}, we have:  $\criteria = \tr f(\E)$, where  $\E = \W_{II}$ and $f(x) = \log(|x|)$. From~\eqref{eq:Saddle2}, $\W_{II}$ has $N_d$ positive and $N_d$ negative eigenvalues, and so it is invertible.
\end{enumerate}

We can now discuss efficient methods for the computation of $\criteria$, using trace estimators. Note that, we do not discuss the computation of $C_U$ and $C_I$.

\subsubsection*{Stochastic Lanczos Quadrature} Based on \cite{Bai_Golub,Ubaru}, we consider this method that combines the Lanczos method with Hutchinson's trace estimator \cite{Hutchinson}. The Hutchinson's estimator approximates the trace of $f(\E)$ as follows:
 \begin{align}\label{def:Hutchinsons_estimator}
     \tr (f(\E))\approx \frac{1}{N}\sum_{\ell =1 }^N \bm z\t_\ell f(\E)\bm z_\ell,
 \end{align}
where $\bm z_\ell$ are independent Rademacher random vectors (with independent entries $\pm 1$ with equal probability). 
 
Employing Hutchinson's estimator to $f(\E)$ only  requires matvecs of the form $f(\E) \bm z_\ell$.  
 The Lanczos method is an iterative algorithm that can approximate $f(\E) \bm z_\ell$ only via matvecs involving  $\E$, making it suitable for large-scale matrices. 
 In the Lanczos process, given the starting vector $\bm{v}_1^\ell:=\bm{z}_\ell/\|\bm{z}_\ell\|_2$, we run $n_\text{iter}$ steps of the Lanczos iteration to obtain the decomposition $\E\V_{n_\text{iter}}^\ell = \V_{n_\text{iter}}^\ell\T^\ell_{n_\text{iter}} + \beta_{n_\text{iter}+1}^\ell\bm{v}_{n_\text{iter}+1}\bm{e}\t_{n_\text{iter}}$. 
 Here $\V_{n_\text{iter}} ^\ell = \begin{bmatrix} \bm{v}_1^\ell & \dots &\bm{v}_{n_\text{iter}}^\ell\end{bmatrix}$ 
 has orthonormal columns (in exact arithmetic) and $\T^\ell_{n_\text{iter}}$ is a tridiagonal matrix, whose eigenvalues approximate some of the eigenvalues of $\E$.

 The stochastic Lanczos quadrature (SLQ) method was designed to approximate $\tr( f(\E))$, by combining the Hutchinson trace estimator along with the Lanczos method.  In particular, the SLQ method uses the estimate
 \begin{align}
     \tr (f(\E))\approx \frac{d}{N}\sum_{\ell =1 }^N \bm e\t_1\! f\!\left(\T^\ell_{n_\text{iter}}\right)\bm e_1,
 \end{align}
where the factor 
$d$ comes from $\|\bm z_\ell\|_2^2 =d.$  
In our implementation, we used Lanczos with full reorthogonalization.
The number of iterations, \( n_\text{iter} \), is the count needed for the relative change in \( \tr\!\left(f(\T^\ell_{n_\text{iter}})\right) \) between consecutive iterations to fall below the predefined tolerance, set here to \( 10^{-10} \).

Since, in general,  the Lanczos method favors the leading  eigenvalues of  $\E$, 
 we expect few Lanczos iterations if the function $f$ also prioritizes these eigenvalues.

As we shall see in the numerical examples, cf. \Cref{tab:SLQ_summary},  the convergence of the SLQ 
method is relatively slow with respect to the number of samples.
To address this, the following approach enhances convergence by extracting additional information from the vectors  $f(\E) \bm z_\ell$ through a randomized Nyström approximation.
This method is particularly advantageous when the action  $f(\E) \bm z_\ell$
is computationally expensive compared to the Nyström method.

\subsubsection*{\xntrace{}+Lanczos}

Recent work by Epperly et al. \cite{Epperly_2024}  has introduced several trace estimators that are more accurate than Hutchinson's estimator. One such method, \xntrace{}, utilizes the Nyström approximation to the matrix $\BPsi$:
\begin{align}
\BPsi\langle \X \rangle := (\BPsi\X) \left( \X\t \BPsi\X \right)^\dagger (\BPsi\X)\t \quad \text{for a test matrix } \X \in \R^{m \times k}.
\end{align}
This method is designed for positive semidefinite matrices. Therefore, in the context of the present work, it only applies to the Preconditioned formulation $\tr(\log(\I + \A\A\t)).$ That is, we take $\BPsi =  \log(\I + \A\A\t)$.

In the  \xntrace{} approach, cf. \cite[Section 2.3]{Epperly_2024}, we draw a test matrix $\bm{\Omega} = [\bm \omega_1 \,\dots\, \bm\omega_N]$, where
$\bm \omega$  follows a  spherically symmetric distribution, 
and then  define the trace estimator as follows:
\begin{align*}
   \widehat{\tr}_{XN}(\BPsi;\bm \Omega) :=\frac{1}{N}\sum_{j=1}^N
   \left(
   \tr (\BPsi \langle \bm \Omega_{-j} \rangle) +\bm \nu\t_j(\BPsi   \bm \nu_j)\right),
\end{align*}
where 
$\bm \Omega_{-j} $ is obtained by removing the $j$-th column of  $\bm \Omega$, leave-one-out approach, and $\bm \nu_j$ is obtained from  $\bm \omega_j$  by removing its projection onto the column space of  $\bm \Omega_{-j}$. For additional details of the method and its implementation, we refer to~\cite[Section 2.2]{Epperly_2024}. 
The action of $\BPsi\langle\X\rangle$ is approximated using the Lanczos method, similar to how it is employed in SLQ. We keep the same setting for the Lanzcos iterations for both methods.

\begin{table}[ht!]
\centering
\begin{tabular}{@{}lll@{}}
\toprule
\textbf{Algorithm}                   & \textbf{Operations (f{}lops)}  & \textbf{Reference}\\ \midrule
\textbf{ SLQ}            & \(N\cdot \cost{f(\E)}=N(n_\text{iter}\cost{\E}+d\cdot n^2_\text{iter} )\)  &  \cite[Section 3]{Ubaru}\\
\textbf{\xntrace{}+Lanczos}               & \(N\cdot \cost{f(\E)}+\mathcal{O}(N^2 d)\)  & \cite[Section 2.2]{Epperly_2024} \\
\bottomrule
\end{tabular}
\caption{Computational complexity of \xntrace{} and Stochastic Lanczos Quadrature algorithms. Here, \(N\) denotes the number of random probes,  and \(\cost{f(\E)}\) represents the computational cost of a single application of \(f(\E)\) via Lanczos, where  \(n_\text{iter}\) is the number of Lanczos iterations.  The cost  \(\cost{\E}\) will depend of each formulation; see  Table~ \ref{tab:formulations_comparison}.}
\label{tab:complexity_comparison}
\end{table}

We summarize the computational costs for the SLQ and \xntrace{} +Lanczos algorithms in Table~\ref{tab:complexity_comparison}. The table omits the costs associated with generating test matrices for clarity and focuses on the dominant terms.


\section{Subset Selection for Optimal Sensor Placement}\label{sec:sensor_placement}

This section discusses near-optimal experimental design within the WC4D-Var framework using a column subset selection approach. We focus on the Golub-Klema-Stewart (GKS) method, as applied to sensor placement in \cite{eswar2024bayesian}, specifically for measurements collected at the final time step.
Then, we extend this approach to the case that measurements are collected in time at indices $0 \le \ell \le n_T$ and propose a randomized variant. 

\subsubsection*{Golub-Klema-Stewart approach} 
Suppose,  we only collect data at time step $n_T$. 
The key insight in~\cite{eswar2024bayesian} is that selecting sensors is equivalent to selecting columns from $\A$. That is, we need to select the columns $\A_{n_T}\S$. The  approach has two steps. First, we compute the truncated SVD of $\A \approx \U_k \bm\Sigma_k \V\t_k$. Second, we compute a pivoted QR factorization of $\V\t_k$ as 
\[\V\t_k \bmat{\bm\Pi_1 & \bm\Pi_2} = \bm{\Psi}_1 \bmat{\bm{T}_{11} & \bm{T}_{12}},\]
where $\bm{\Psi}_1 \in \R^{k \times k}$ is orthogonal, $\bm\Pi_1 \in \R^{k\times k}$ is a permutation matrix, and $\bm{T}_{11}$ is an upper triangular matrix. Setting the selection matrix $\S = \bm\Pi_1$,
it was shown in~\cite{eswar2024bayesian} that
\begin{equation}\label{eqn:eigbound}
    \logdet(\bm{I} + \bm\Sigma_k^2/q(n_s,k)^2)\le  \logdet(\I + \A_{n_T}\S\S\t\A_{n_T})  \le \logdet(\bm{I} + \bm\Sigma_k^2).
\end{equation}   
Here, $q(n_s,k)$ is a factor that depends on the specific implementation. If strong rank-revealing QR (sRRQR) is used~\cite[Algorithm 4]{gu1996efficient}, with parameter $f = 2$, then $q(n_s,k) = \sqrt{1 + 4k(n_s-k)}$. Thus, the selected columns are nearly optimal, as can be seen from~\eqref{eqn:eigbound}.
In our numerical experiments, we  use column pivoted QR (CPQR) for which~\eqref{eqn:eigbound} $q(n_s,k) = \sqrt{n_s-k}\cdot 2^k$. Although the bound is weaker than for sRRQR, CPQR is much cheaper than sRRQR and has nearly the same performance empirically. This algorithm can be implemented in a matrix-free fashion. Only the truncated SVD requires access to $\A$ and can be computed matrix-free using either a Randomized SVD \cite{Halko} or a Krylov subspace method \cite{Matrix_Computations}.

\subsubsection*{Extension to time-dependent setting} We now discuss how to extend the approach  
in~\cite{eswar2024bayesian}
to the time-dependent case, i.e., data is collected at times $0\le \ell \le n_T$. 
We may partition the matrix $\A$ as in \eqref{eq:partitionA}:
\[ \A = \bmat{\A_0 & \cdots & \A_{n_T}}\in \R^{N_d \times (n_Tn_s)}. \]
The GKS approach can be applied independently to each block, but it results in a different set of columns (sensors) for each time step. However, we need a strategy to ensure that the same columns are selected from each block $\A_i$ for $1 \le i \le n_T$. 

To this end, we consider the reshaped matrix $\AR$ defined as 
\begin{equation}
    \label{eqn:reshaped}
    \AR = \bmat{\A\t_0 & \cdots &\A\t_{n_T}}\t \in \R^{N_d(n_T+1) \times n_s}.
\end{equation}
It is clear that by selecting columns from the reshaped matrix, we are, in effect, selecting the same columns from each block. Therefore, we now apply GKS on this reshaped matrix.
This is described in \cref{alg:GKS_sensorPlacement}. 
 Note that $\AR$ need not be formed explicitly, and the truncated SVD can be implemented in a matrix-free fashion.

We now derive a result on the performance of any subset selection algorithm. 
Let $K=(n_T+1)k$ and let $\bm{A} \approx \U_K\bm\Sigma_K\V\t_K$ represent the truncated SVD. We have the following result:
\begin{proposition}
    Let $\rank(\V\t_K (\S \otimes  \I)) = K$  and define $\zeta := \| (\V\t_K (\S \otimes  \I))^{-1}\|_2$. Then, 
\begin{equation}\label{eqn:eigbound2}
    \logdet(\bm{I} + \bm\Sigma_K^2/\zeta^2) \le  \criteria(\S) \le \criteria(\S^*_{\rm WC}) \le \logdet(\bm{I} + \bm\Sigma_K^2).
\end{equation}   
\end{proposition}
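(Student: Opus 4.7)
The plan is to prove the three inequalities separately. The middle bound $\criteria(\S) \le \criteria(\S^*_{\rm WC})$ is immediate from the definition of $\S^*_{\rm WC}$ as a maximizer, so the real work lies in the outer two bounds.

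For the upper bound on $\criteria(\S^*_{\rm WC})$, I would first observe that $\P := (\I \otimes \S)(\I \otimes \S)\t$ is an orthogonal projector of rank at most $K = (n_T+1)k$. Hence $\A \P \A\t \preceq \A \A\t$ and $\A\P\A\t$ has rank at most $K$. By Weyl's monotonicity theorem (the Courant--Fischer consequence of the Loewner inequality), $\lambda_i(\A \P \A\t) \le \sigma_i(\A)^2$ for $1 \le i \le K$, while $\lambda_i(\A \P \A\t) = 0$ for $i > K$. Summing $\log(1+\lambda_i)$ then yields $\criteria(\S) \le \logdet(\I + \bm\Sigma_K^2)$ for every admissible $\S$, and in particular for $\S = \S^*_{\rm WC}$.

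For the lower bound, I would decompose $\A = \U_K \bm\Sigma_K \V_K\t + \E$, where the residual $\E$ satisfies $\U_K\t \E = \bm 0$ by orthogonality of the SVD. Setting $\B := \A(\I \otimes \S)$ and $\M := \V_K\t (\I \otimes \S)$ (the $K \times K$ matrix that is invertible by hypothesis), the cross terms in $\B\t \B$ vanish and leave the clean one-sided inequality
\begin{equation*}
\B\t \B = \M\t \bm\Sigma_K^2 \M + (\I \otimes \S)\t \E\t \E (\I \otimes \S) \succeq \M\t \bm\Sigma_K^2 \M.
\end{equation*}
Sylvester's determinant identity and monotonicity of $\logdet$ on positive definite matrices then give
\begin{equation*}
\criteria(\S) = \logdet(\I + \B\t \B) \ge \logdet(\I + \bm\Sigma_K \M \M\t \bm\Sigma_K).
\end{equation*}
Since $\M$ is square and invertible with $\|\M^{-1}\|_2 = \zeta$, its singular values are bounded below by $\zeta^{-1}$, so $\M \M\t \succeq \zeta^{-2} \I$, and hence $\bm\Sigma_K \M \M\t \bm\Sigma_K \succeq \zeta^{-2} \bm\Sigma_K^2$. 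A final application of $\logdet$-monotonicity yields the asserted lower bound.

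The principal obstacle is ensuring that the cross terms from the truncated-SVD residual do not introduce a sign-indefinite correction when passing from $\B\t \B$ to $\M\t \bm\Sigma_K^2 \M$; exploiting the SVD orthogonality $\U_K\t \E = \bm 0$ is what eliminates them exactly and produces a clean Loewner inequality. The remaining manipulations---Sylvester's identity, Loewner monotonicity of $\logdet$, and the identification $\|\M^{-1}\|_2^{-1} = \sigma_{\min}(\M)$---are standard.
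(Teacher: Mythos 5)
Your proof is correct and is essentially the argument the paper invokes by citation: the paper's own ``proof'' is a one-line reference to \cite[Theorem 3.2]{eswar2024bayesian}, and your reconstruction (the rank-$K$ projector plus Weyl interlacing for the upper bound; the orthogonality $\U_K\t\E=\bm 0$ killing the cross terms, Sylvester's identity, and $\M\M\t\succeq\zeta^{-2}\I$ for the lower bound) is precisely how that result is established, so you have simply made the deferred argument self-contained. The only cosmetic point is the Kronecker ordering: the proposition statement writes $\V_K\t(\S\otimes\I)$ while the criterion \eqref{def:EIG_S} uses $(\I\otimes\S)$; your consistent use of $(\I\otimes\S)$ matches the definition of $\criteria(\S)$ and the discrepancy is the paper's, arising from the reshaping in \eqref{eqn:reshaped}.
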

\begin{proof}
    This follows readily from the arguments in~\cite[Theorem 3.2]{eswar2024bayesian}.
\end{proof}

\begin{algorithm}[!ht]
\caption{Sensor placement using column subset selection}
\label{alg:GKS_sensorPlacement}

\begin{algorithmic}[1]
\Require ${\A} \in \mathbb{R}^{N_d \times (n_T \cdot n_s)}$: Data matrix, $n_T$: Number of snapshots, 
 $n_s$: Number of candidate sensors, 
$k$ : Number of active sensors to select 
\State Construct $\AR$ as in~\eqref{eqn:reshaped}
\State Compute the top \(k\) right singular vectors \(\V_k\) from the truncated SVD of \(\AR\).
\State  Perform column-pivoted QR  on \(\V_k^\top\) to obtain the permutation matrix \(\bm{\Pi}\).
\State Set \(\S\) as the  first \(k\) columns of \(\bm{\Pi}\). 
\Comment{Select the top-\(k\) sensor indices.}
\State \Return \(\S\): Indices of the $k$ most informative sensors 
\end{algorithmic}
\end{algorithm}

To analyze the computational cost, let  $T_{\AR}$ represent the cost of a matvec with $\AR$ and its transpose. The cost of a truncated SVD is $\mc{O}(kT_{\AR} + k^2N_dn_T)$ flops whereas the CPQR costs $\mc{O}(k^2n_s)$ flops.

\subsubsection*{Randomized approach} In this approach, we draw a random matrix $\bm\Omega \in \R^{D\times N_n}$ and compute the sketched matrix 
\begin{align} \label{eq:Rand_sensor_selection}
\bm{Y} = \bm\Omega \A \in \R^{D\times N_m}. 
\end{align}
The random matrix $\bm\Omega$ has 
independent $\mc{N}(0,1/D)$ entries. The number of rows $D$ is taken based on the formula $D = n_T\cdot k + p$, where $p$ is a small oversampling parameter, e.g., $p \approx 20$. We then instantiate \Cref{alg:GKS_sensorPlacement} with $\bm{Y}$ rather than $\A$ and in step 1, instead of $\AR$, we reshape to get $\Y_{\bm{R}} \in \R^{Dn_T \times n_s}$. We omit a detailed description of the algorithm.  The computational cost of this algorithm is \[(DT_{\A}) + \mc{O}(Dn_T n_s^2 + n_sk^2)  \> \text{flops}.  \] 
Here $T_{\A}$ is the cost of a matvec with $\A\t$.

This randomized approach has several advantages  over \cref{alg:GKS_sensorPlacement}. First, we do not need to construct or work with $\AR$ and instead we can work directly with $\A$. Second, the algorithm does not require adjoints of the forward operator. Observe that we can form $\Y$ as 
\[ \Y\t = \A\t \bm\Omega\t = \Gmeas^{-\frac 12}\O\L^{-1} \Gmod^{\frac 12}\bm\Omega\t. \]
This approach was first proposed in~\cite{eswar2024bayesian} and was referred to as randomized adjoint-free OED (RAF-OED). The key advantage of this method is that, for PDE-based problems, it eliminates the need for adjoint PDE solves. This is important in applications where the adjoint is unavailable (e.g., due to legacy codes), erroneous, or is more expensive than the forward problem. The only difference here is that we extend it to the time-dependent case.

The main intuition behind the randomization is that the matrix $\bm{Y}$ captures essential information regarding the row-space of the matrix. To give some quantitative justification for the approach, let $\bm{C} := \bm{A}(\S \otimes \I) \in \R^{N_d\times (n_Tk)}$ denote the matrix corresponding to any selection of $k$ sensors and let $\bm{C} =\bm{U}_C\bm\Sigma_C\bm{V}\t_C$ denote the thin-SVD of $\bm{C}$. 
By applying the selection operator to the sketch $\widetilde{\bm{Y}} = \bm{Y}(\S \otimes \I) = \bm{\Omega}\bm{A}(\S \otimes \I) = \bm\Omega\bm{C} $, we can show by using singular value inequalities  
\[   \frac{\sigma_j(\bm{C})}{ \| (\bm\Omega \bm{U}_C)^\dagger\|_2}\le \sigma_j(\widetilde{\bm{Y}}) \le \sigma_j(\bm{C}) \| \bm\Omega \bm{U}_C\|_2, \qquad 1 \le j \le K.\]
Note that by orthogonal invariance, $\bm\Omega \bm{U}_C \in \R^{D\times K}$ is a standard Gaussian random matrix. By using~\cite[Section 7.3]{vershynin2018high}, with probability at least $1-2e^{-t^2/2}$,  
\[ \frac{\sigma_j(\bm{C})}{1 - \sqrt{K/D} -t }\le \sigma_j(\widetilde{\bm{Y}}) \le \sigma_j(\bm{C}) \left( 1 + \sqrt{\frac{K}{D}}  + t\right), \qquad 1 \le j \le K .\]
For example, for any $ 0 < \epsilon, \delta < 1$,  we can choose $D =\epsilon^{-2}(\sqrt{K}+ \sqrt{2\ln(2/\delta)})^2$ to get
\[  \frac{\sigma_j(\bm{C})}{1 - \epsilon }\le \sigma_j(\widetilde{\bm{Y}}) \le \sigma_j(\bm{C}) \left( 1 + \epsilon\right), \qquad 1 \le j \le K ,\]
with probability at least $1-\delta$. Therefore, the singular values of $\widetilde{\bm{Y}}$ can be considered to be good approximations to the singular values of $\bm{C}$.

\section{Numerical Experiments}\label{sec:Numerical_experiments}
This section describes numerical experiments conducted in one and two dimensions.
We utilize Python libraries such as NumPy  \cite{NumPy} and SciPy \cite{2020SciPy-NMeth}.
Additionally, the FEniCSx platform   \cite{FEniCSX}  is utilized for PDE discretization and solution in the 2D case. 
The computations were performed on a 
2020 Apple MacBook Air (M1, 8GB RAM).
%
\subsection{1D heat equation}\label{sec:1d_problem} 
The first experiment focuses on the time-dependent heat equation in one spatial dimension. This experiment investigates the spatial and temporal evolution of the temperature distribution $u$ within a domain $\Omega:=(0,1)$. The heat equation governing this evolution can be expressed as follows:
\begin{align}\label{eqn:1dheat}
\begin{aligned}
\frac{\partial u}{\partial t} &= 
\frac{\partial}{\partial x}  \left({\kappa} \frac{\partial u}{\partial x}   \right), & \text{in } \Omega \times (0, T), \\
u &= 0, & \text{on } \partial \Omega \times (0, T), \\
u &= u_0, & \text{on } \Omega \times \{0\}.
\end{aligned}
\end{align}
Here, $u(x, t)$ denotes the temperature at spatial position $x$ and time $t$, while ${\kappa}$ represents the thermal diffusivity.  
The inverse problem involves estimating the initial condition $u_0$ from discrete measurements of the state $u(x,t)$ in space and time at selected sensor locations.   
\subsubsection*{Model error and background}

The ``true model'' is represented by~\eqref{eqn:1dheat} with thermal diffusivity:
\[{\kappa}^\epsilon ({x}) = 2 + \sin\left({2\pi (x/ \epsilon)}\right) \quad \forall x \in \Omega,
\]
with $\epsilon = 2^{-4}$, representing the microstructural length scale.  To introduce modeling error, we apply homogenization, a standard technique for handling inhomogeneous microstructures, and set the homogenized diffusivity to  ${\kappa}^0(x) \equiv \sqrt{3}$, see  \cite{ALEXANDERIAN_HOMOGENIZATION, bensoussan1978asymptotic}. For the WC4D-Var, we use the homogenized thermal diffusivity in the data assimilation. We take  the  initial conditions  for the true and background models as discretized representations of the following
\begin{align}
\begin{aligned}
    u_0^{\text{true}}(x)  := & \> \exp\left(-\frac{1}{2}\left(\frac{x - \mu}{\sigma}\right)^2\right), \quad \mbox{ where } \mu = 0.7 \mbox{ and } \sigma = 0.08.  \\
    u_0^{\text{b}}(x)  := & \> u_0^{\text{true}}(x) + 0.2\exp\left(-\frac{1}{2}\left(\frac{x - \mu^b}{\sigma}\right)^2\right), \quad \mbox{ where } \mu^b = 0.2.  
\end{aligned}    
\end{align}
Thus, the background $u_0^{\text{b}}$ is a perturbation of the true initial condition $u_0^{\text{true}}$.

\subsubsection*{Discretization}
To solve the PDE~\eqref{eqn:1dheat}, we employ the finite element method (FEM) in space and the implicit Euler in time discretization. 
More specifically, we consider uniform partitions for both time and space, where the spatial mesh size is $h$ and the time step is denoted by $\Delta t$. 
In particular, the spatial domain $\Omega$ is discretized using a uniform mesh with 400 intervals.
Let $\{ \varphi_k\}_{k =1}^{d_S}$ represents a FEM basis for $\mathcal{P}_1(\Omega_h)$, where $\Omega_h$ is a mesh/partition of $\Omega$,  and $d_S$ represents the number of basis functions. The mass and stiffness matrices  are defined component-wise as:
\begin{align*}
    \mathbf{N}_{i,j}&:=  \int_{\Omega} \varphi_i\varphi_j \,dx, \quad 
    \mathbf{K}_{i,j}:= \int_{\Omega}   \kappa(x) 
    \frac{\partial \varphi_i}{\partial x} 
    \frac{\partial \varphi_j}{\partial x}
    \,dx, \qquad 1 \leq i,j  \leq d_S,
\end{align*}
where $\kappa$ is either $\kappa^\epsilon$ for the true or $\kappa^0$ for the inaccurate model. 

Denoting $\bmu_m\in \R^{d_S}$ as the coefficients of the FEM approximation for $u(\cdot,m*\Delta t)$, the discrete heat equation becomes 
\begin{align}
    \bmu_{m+1} &= (\mathbf{N}+\Delta t \mathbf{K})^{-1} {\mathbf{N}}\bmu_m, \quad \mbox{ for }  0\le m \le m_T-1, \label{eq:evolution}
\end{align}
where 
$m_T$ is the number of time steps for the time integration.
We set the final time to $T = 4 \times 10^{-2}$ with a discretization time step of $\Delta t \approx 1.5625 \times 10^{-5}$. Observation times are given by 
\[
t_\ell = k \Delta t, \quad \ell = 1,\dots,n_T,
\]
with $k \in \mathbb{N}$ chosen so that $t_k$ are uniformly spaced at intervals of $4 \times 10^{-3}$, yielding $n_T = 10$ snapshots.

\subsubsection*{Data acquisition} 
We consider $n_s = 28$  candidate sensors placed uniformly in the interval $[0.025,0.975]$, see \Cref{fig:sensorlocationsRandomGKS1D} along with designs selected by the proposed algorithms; the details are explained below. Given the use of piecewise linear elements with $d_S = 401$, the sensor locations define the sparse observation matrix \( \O_\ell \) of size \( n_s \times d_S \), which remains fixed in time for \( 1 \leq \ell \leq n_T \).  
We use {synthetic observations} generated using the true model with $2\%$ additive Gaussian noise to simulate measurement error.
\subsubsection*{Background and model uncertainty} The background is taken to be
$\bm{u}_0  \sim \mathcal{N}( \bm{u}_0^{b}, \Gb)$, where $\bm{u}_0^{b}$ is the discretized representation of $u_0^b$.
  We take the background covariance matrix  
\begin{align}
\Gb = \left(\gamma\mathbf{K} + \mathbf{N}\right)^{-1}\mathbf{N} \left(\gamma\mathbf{K} +\mathbf{N}\right)^{-1},
\end{align}
where we set  $\gamma=10^{-1}$. To construct the covariance matrices \(\bm Q_\ell\) for \(1 \leq \ell \leq n_T\), we approximate the model error covariance using sample-based estimation. Specifically, we generate \(N = 40\) trajectories by sampling initial states from the  prior distribution $\mathcal{N}( \bm{u}_0^{b}, \Gb)$,  and then evolving them using both the true and approximate (background) models. For each time step \(\ell\), the difference between the true and approximate trajectories forms a collection of error samples. We compute the sample covariance matrix \(\widetilde{\bm \Gamma}_\ell\) of these errors at each time step. 
To ensure numerical stability and avoid issues caused by small singular values, we add a small regularization term,  ``nugget'', to the diagonal. The resulting covariance matrix is given by:
\[
\bm Q_\ell = \widetilde{\bm \Gamma}_\ell + \delta_\ell \I, \qquad 1 \leq \ell \leq n_T,
\]
where \(\delta_\ell = 10^{-12} + 10^{-6} \|\widetilde{\bm \Gamma}_\ell\|_2\). This regularization ensures that \(\bm Q_\ell\) remains well-conditioned while smoothly damping the smaller eigenvalues.
\subsubsection*{The MAP estimate}
We revisit the linear system  \eqref{def:distribution_upost} related to the MAP estimate:
\begin{align*}
    \left( \O\t \Gmeas^{-1} \O + \L\t \Gmod^{-1} \L\right) \upost
&= 
    \O\t \Gmeas^{-1} \bmyo + \L\t \Gmod^{-1} \L \uprior
\end{align*}
Using the closed-form expressions for $\L^{-1}$ and its transpose from \eqref{lemma:L}, we use  $\Gpr = (\L\t \Gmod^{-1} \L)^{-1}$, cf. \eqref{def:distribution_uprior},  as a preconditioner for the iterative solvers.
The use of this particular preconditioner significantly reduces the required iterations. The number of Preconditioned Conjugate Gradient (PCG) iterations is reduced from $1409$ for the unpreconditioned case (i.e., identity as a preconditioner) to just 108 for the preconditioned case.

\subsubsection*{Estimating the EIG}

Here, we present results for matrix-free methods to approximate different formulations of the EIG, as discussed in \Cref{section:Alternative_formulations} and \Cref{sec:preclanc}. The experiments are divided into two parts.
First, we assess the accuracy of the SLQ method for log-determinant (EIG) approximations across four WC-4DVar formulations. In the second part, we compare the SLQ with the approach that employs the \xntrace{} method combined with Lanczos iterations.

We compare the different formulations in Section~\ref{section:Alternative_formulations}. We limit the number of sampling vectors  to $N=2^3$ and use a relative error tolerance of $1 \times 10^{-10}$ as the stopping criterion for the Lanczos iterations. The results are presented in \Cref{tab:logdet_4form}, where the Lanczos iterations are averaged over the $N$ samples. We see that the Preconditioned formulation has the lowest relative error for the fixed sample size; however, it has a higher computational cost (see \Cref{tab:formulations_comparison}).  

\begin{table}[!ht]
\centering
\begin{tabular}{lcccc}
\toprule
\textbf{Formulation} & \textbf{Matrix size} $d$ & \textbf{Lanczos Iters} &  \textbf{Rel. Error} \\
\midrule
\textbf{Unpreconditioned} \eqref{eqn:unprec}  & $4010$ & 533 &  $7.9 \times 10^{-4}$ \\
\textbf{Preconditioned} \eqref{eqn:criterion} & $4010$ & 37 &   $1.5 \times 10^{-4}$\\
\textbf{SP-I} \eqref{eq:Saddle1} & $8300$ & 350 &   $1.1 \times 10^{-4}$\\
\textbf{SP-II} \eqref{eq:Saddle2} & $8020$ & 550 &$9.0 \times 10^{-2}$ \\
\bottomrule
\end{tabular}
\caption{Results for Log-Determinant Approximation using SLQ with $N=2^3.$ }
\label{tab:logdet_4form}
\end{table}

Next, we compare SLQ with \xntrace{} coupled with Lanczos iterations for the preconditioned formulation.  
Table \ref{tab:SLQ_summary} summarizes the mean relative error, standard deviation, and average number of iterations for Lanczos iterations need for SLQ and \xntrace{} estimators. We see that \xntrace{} is more accurate for the same number of samples, with a lower mean relative error and standard deviation.

\begin{table}[ht!]
\centering
\begin{tabular}{cc|cc|cc}
\hline
\textbf{Samples} & \textbf{Lanczos}  & \multicolumn{2}{c}{\textbf{SLQ}} & \multicolumn{2}{c}{\textbf{\xntrace{}+Lanczos} } \\
\cline{3-6}
   $(N)$     &      \textbf{Iters.}         & \textbf{Mean} & \textbf{Std. Dev.} & \textbf{Mean} & \textbf{Std. Dev.} \\
\hline
2       & 73.5                & $2.99 \times 10^{-4}$ & $2.49 \times 10^{-4}$ & $4.17 \times 10^{-4}$ & $2.99 \times 10^{-4}$ \\
4       & 148.2               & $2.08 \times 10^{-4}$ & $1.62 \times 10^{-4}$ & $2.40 \times 10^{-4}$ & $1.85 \times 10^{-4}$ \\
8       & 296.3               & $1.48 \times 10^{-4}$ & $1.28 \times 10^{-4}$ & $1.80 \times 10^{-4}$ & $1.48 \times 10^{-4}$ \\
16      & 592.1               & $1.01 \times 10^{-4}$ & $8.08 \times 10^{-5}$ & $1.11 \times 10^{-4}$ & $7.85 \times 10^{-5}$ \\
32      & 1186.9              & $8.31 \times 10^{-5}$ & $5.48 \times 10^{-5}$ & $5.59 \times 10^{-5}$ & $4.11 \times 10^{-5}$ \\
64      & 2369.5              & $4.85 \times 10^{-5}$ & $4.58 \times 10^{-5}$ & $4.01 \times 10^{-6}$ & $3.18 \times 10^{-6}$ \\
128     & 4741.9              & $3.75 \times 10^{-5}$ & $2.90 \times 10^{-5}$ & $1.05 \times 10^{-7}$ & $8.24 \times 10^{-8}$ \\
\hline
\end{tabular}
\caption{Number of samples, average Lanczos iterations, average relative errors, and standard deviations for Hutchinson's  and \xntrace{}  estimators over 100 trials.}
\label{tab:SLQ_summary}
\end{table}

\subsubsection*{Sensor placement} 
This section evaluates the performance of deterministic and randomized sensor placement methods introduced in Section \ref{sec:sensor_placement}. 
Our aim is to select a subset of sensors from  $n_s=28$ candidate sensor locations. We consider two cases: selecting $k=10$ and $k=5$ sensors. 
An exhaustive search for  $k=10$  would require evaluating   ${28 \choose 10} \approx 13 \times 10^6$ possible combinations, which is computationally prohibitive for large problems.
The columns selected by the deterministic approach based on the CSSP approach using the GKS method are denoted by $\A\SSS{CSSP}$. In contrast, the RAF-OED, which employs a sketching matrix with an oversampling parameter $p=20$, see \eqref{eq:Rand_sensor_selection}, yields columns denoted by $\A\SSS{R}$. 
Additionally, we include results from a  greedy selection algorithm denoted by $\A\SSS{G}$, for comparison. 

\begin{table}[!ht]
    \centering
    \caption{EIG Values from Different Methods }
    \label{tab:d_optimal_values}
    \begin{subtable}[t]{0.5\textwidth}
        \centering
        
        \begin{tabular}{lr}
            \toprule
            Method & {$\criteria(\S)$} \\
            \midrule
            Best ($\S_\textrm{max}$) & 106.30 \\
            Greedy ($\S=\S_\text{G}$) & { 95.76} \\
            Algorithm~\ref{alg:GKS_sensorPlacement} & {97.71} \\
            RAF-OED & 97.71 \\
            \bottomrule
        \end{tabular}
        \caption*{$k=5$}
    \end{subtable}%
    \hfill
    \begin{subtable}[t]{0.5\textwidth}
        \centering
        \begin{tabular}{lS[table-format=3.3]}
            \toprule
            Method & {$\criteria(\S)$} \\
            \midrule
            Best ($\S_\textrm{max}$)  & unknown \\
            Greedy ($\S=\S_\text{G}$) & 127.31 \\
            Algorithm~\ref{alg:GKS_sensorPlacement}  & 125.89 \\
            RAF-OED & 127.22 \\
            \bottomrule
        \end{tabular}
        \caption*{$k=10$}
    \end{subtable}
\end{table}

\begin{figure}[!ht]
    \centering
    \includegraphics[width=1\linewidth]{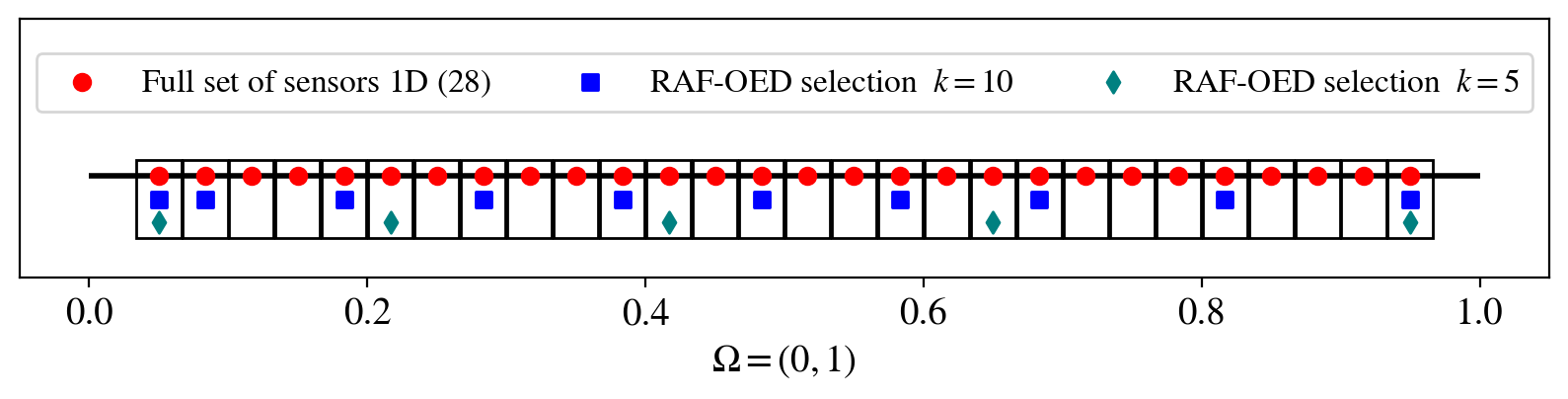}
    \caption{The sensor locations determined by RAF-OED for different values of $k$.}
    \label{fig:sensorlocationsRandomGKS1D}
\end{figure}
\begin{figure}[!ht]
    \centering
    \begin{subfigure}[b]{0.48\linewidth}
        \centering
        \includegraphics[width=\linewidth]{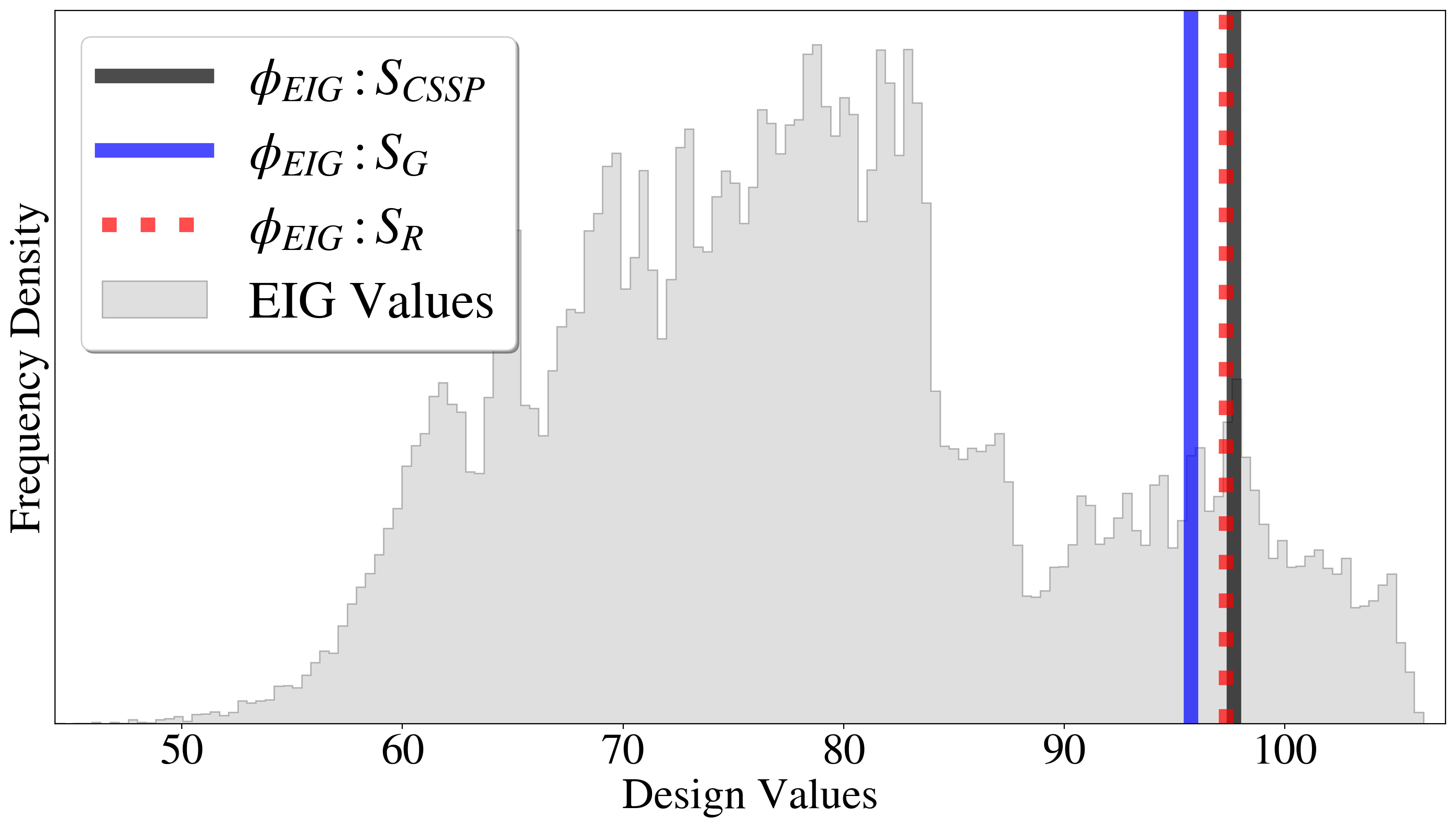}
        \caption*{$k=5$}
        \label{fig:OED_rand_k5}
    \end{subfigure}%
    \begin{subfigure}[b]{0.48\linewidth}
        \centering
        \includegraphics[
        width=\linewidth]{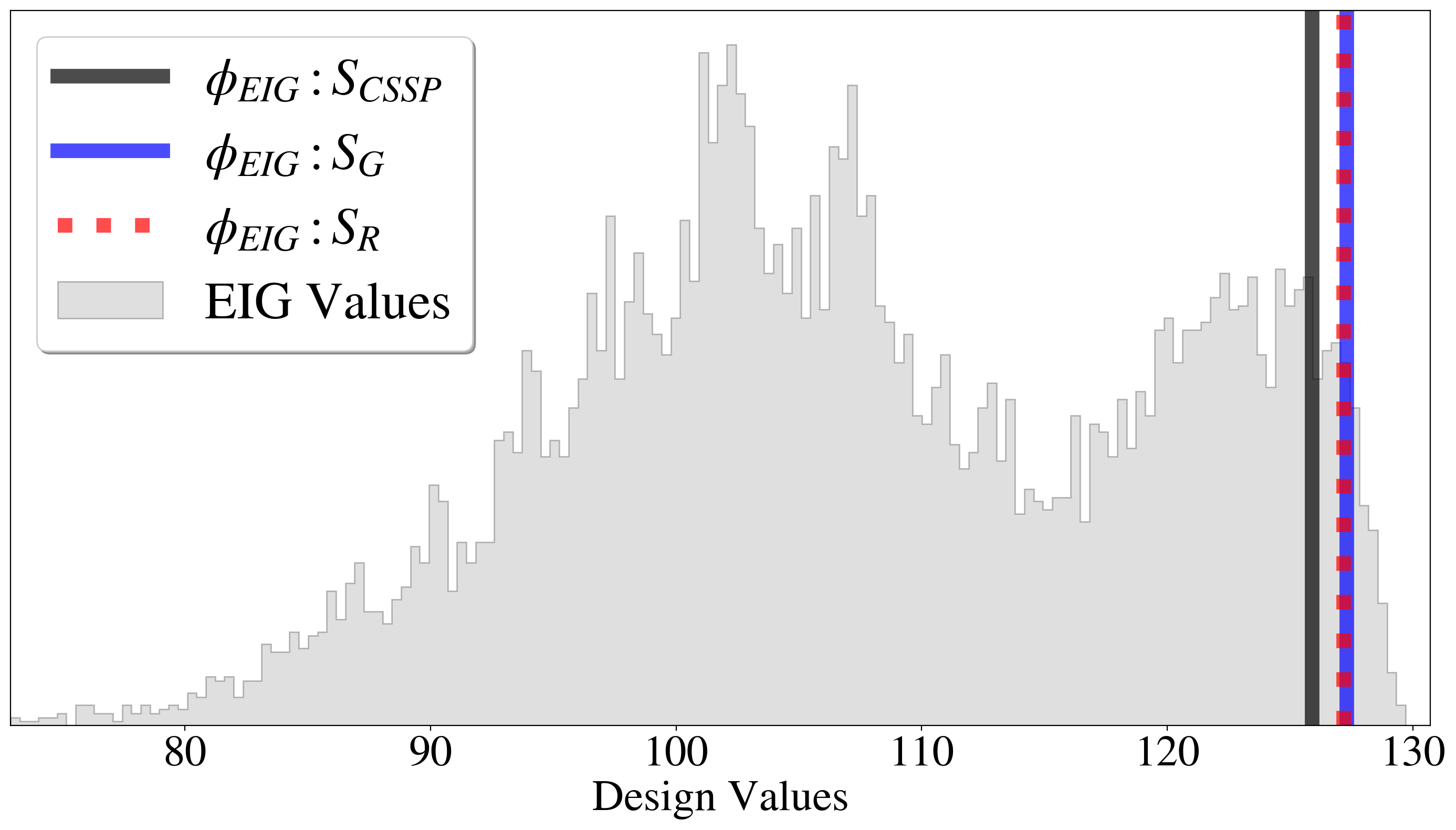}
        \caption*{$k=10$}
        \label{fig:histo_k10}
    \end{subfigure}
    \caption{Comparison of different sensor placement methods. The histogram represents the criterion values for random designs.}
    \label{fig:CS}
\end{figure}

Figure~\ref{fig:sensorlocationsRandomGKS1D} shows the locations of the full set of sensors ($n_s=28$), along with those selected by the randomized approach ($\SSS{R}$) for both $k=5$ and $k=10$. 
Figure~\ref{fig:CS}  compares the design values obtained using these methods along with \(6 \times 10^4\) random designs  for $k=10$, and for $k=5$ we considered all ${28 \choose 5 } = 98,280$ possible columns combinations. 
For $k=10$, the GKS-based method yields a higher EIG value compared to approximately 94\% of the random designs, while its randomized variant (RAF-OED) exceeds about 97\% of the designs.
When selecting \(k=5\) sensors, both methods (GKS and RAF-OED) achieve better results than 91\% of all possible designs. 
Notably, in both scenarios, our algorithm achieves results that are either comparable to or better than those of the greedy approach. As noted in~\cite{eswar2024bayesian}, the greedy approach is more expensive compared to the CSSP methods discussed. 
Therefore, the performance of our algorithm is particularly favorable, as it delivers similar results without some of the computational limitations of the greedy approach.

\subsection{2D advection-diffusion problem}
We consider a two-dimensional advection-diffusion (AD) problem from~\cite{VillaPetraGhattas21}, where the concentration $c= c(\bm x,t) $ is governed by the following partial differential equation (PDE):
\begin{align}
\begin{aligned}
    \frac{\partial c}{\partial t} + \bmv\cdot \nabla c- \mathrm{div} \nabla c &= f \qquad \mbox{in } \Omega:= (-1,1)^2,\; t \in (0,2], \\ 
    c(0)&= c_0.
\end{aligned}
    \label{eq:AD}
\end{align}
Here we assume homogeneous Neumann boundary conditions. We also consider a divergence-free velocity field, visualized in Figure \ref{fig:v_field}, denoted as 
$$\bm v(x,y):=[2y(1-x^2)\,\, -\!2x(1-y)^2]\t,$$ 
which is  a simplification of a solution for the cavity flow problem. Furthermore, $f$ is a source term. As in the previous application, the goal is to determine the initial condition $c_0$ from discrete spatiotemporal measurements of the field $c(\bm{x},t)$.

\subsubsection*{True and background models}
Here, we consider uncertainties arising from the initial conditions.  Specifically, for the true initial condition, which results in the true state/snapshots $\bm c^\text{true}_\ell$,  we assume that the concentration is zero throughout the domain, except in two localized regions. These regions exhibit concentration patterns characterized by Gaussian-like distributions.

 \begin{figure}[htb!]
    \centering 
    \includegraphics[width=0.28\linewidth]{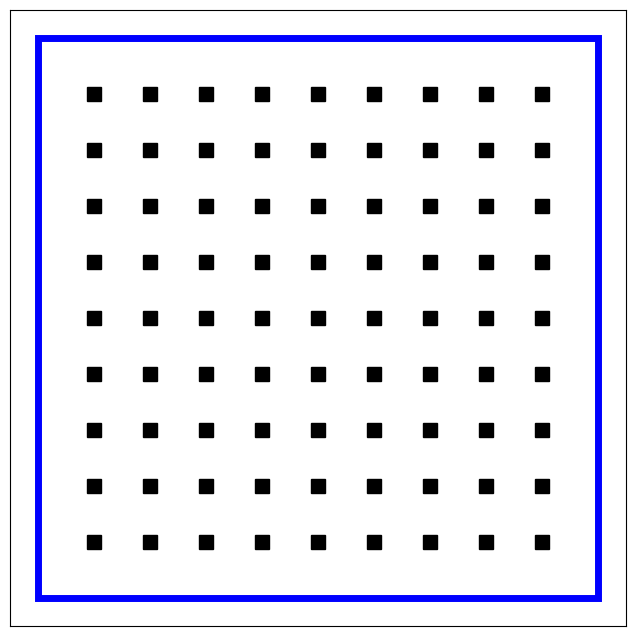}
    \includegraphics[width=0.28\linewidth]{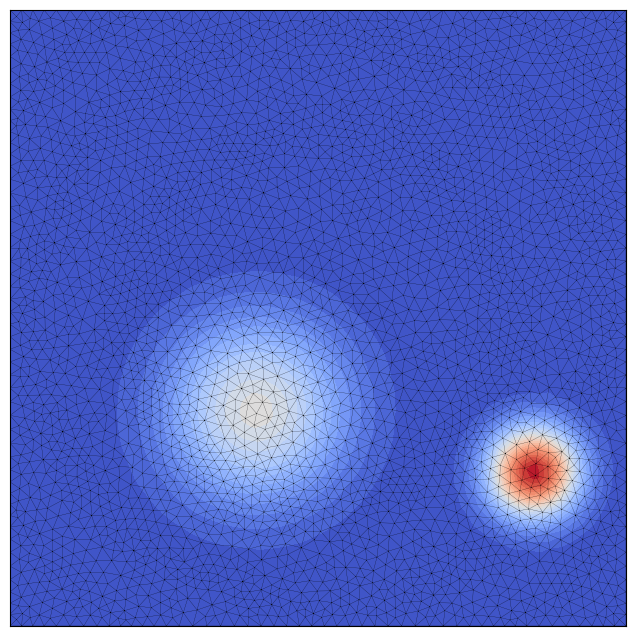}
     \includegraphics[width=0.38\linewidth]{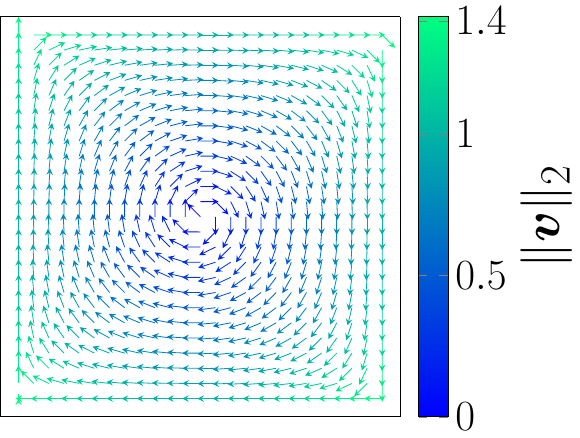}
    \caption{Figures illustrating the candidate sensor locations (left), initial condition $ c_0$ (middle), and velocity field $\bmv$ (right).}
    \label{fig:v_field}
\end{figure} 

For data collection, we employ a similar approach as in the 1D problem. However, in this case, the sensor locations are uniformly distributed on a 2D grid, positioned away from the boundary.
 In time, we discretize the interval $[0,2]$ into 200 steps using a uniform time step $\Delta t$. Observations are recorded every 20 time steps, resulting in 10 snapshots.
Figure  \ref{fig:v_field} shows the sensor distribution (left), the true initial concentration $c_0$ (middle), and the velocity field $\bmv$ (right).

\subsubsection*{Discretization}

We consider the Backward Euler method for temporal discretization and a primal formulation for spatial discretization of (\ref{eq:AD}). Specifically, we consider the Lagrange elements of order 1 denoted by $V_h$. 
Thus,  given $c_n \approx c(\cdot,t_n)$, we seek $c_{n+1}\in V_h$ such that:
\begin{align}
\int_\Omega \left(\frac{c_{n+1} - c_{n}}{\Delta t} \right)w_h + (\bmv_h \cdot \nabla c_{n+1})w_h + \left( \nabla c_{n+1} \cdot \nabla w_h \right) d\bm x  = \int_\Omega f_{n+1}w_h d\bm x, \label{eq:formulation1AD}
\end{align} 
for all $ w_h $ in $V_h.$
Here, we have assumed a uniform time partition and homogeneous Neumann boundary conditions for simplicity. 
This method leads to the following linear system:
\begin{align}
    \begin{aligned}
        \left(\mathbf{N}+\Delta t\mathbf{B}_{\bm v_h} + \Delta t\mathbf{K}\right)\bm c_{n+1} = \mathbf{N} \left( \Delta t \bm f_{n+1} + \bm c_n\right),
    \end{aligned}  \label{eq:formulation1AD_discrete}
\end{align}
where $ 0 \le n  \le 200$, and observations are made  every 20 time steps. 
Here, $\mathbf{N}$ and $\mathbf{K}$ denote the mass and stiffness matrices, respectively, while $\mathbf{B}_{\bm v_h}$ represents the matrix associated with the  (non-symmetric) bilinear form:
\begin{align*}
    \mathcal{B}(c_h,w_h;\bm v_h):=\int_\Omega (\bmv_h \cdot \nabla c_h)w_h d\bm x\qquad \forall c_h,w_h\in V_h.
\end{align*}
We employ a sparse LU factorization to solve the system (\ref{eq:formulation1AD_discrete}), and  $\bmv_h$ is the $L^2$-projection of $\bmv$ onto $\mathcal{P}^2_2(\mathcal{T}_h)$ space.  
The FEM approximation of $c$ at equispaced times over $[0,T]$ is depicted in Figure \ref{fig:c_snapshots}.
 \begin{figure}[ht!]
    \centering
    \includegraphics[width=0.15\linewidth]{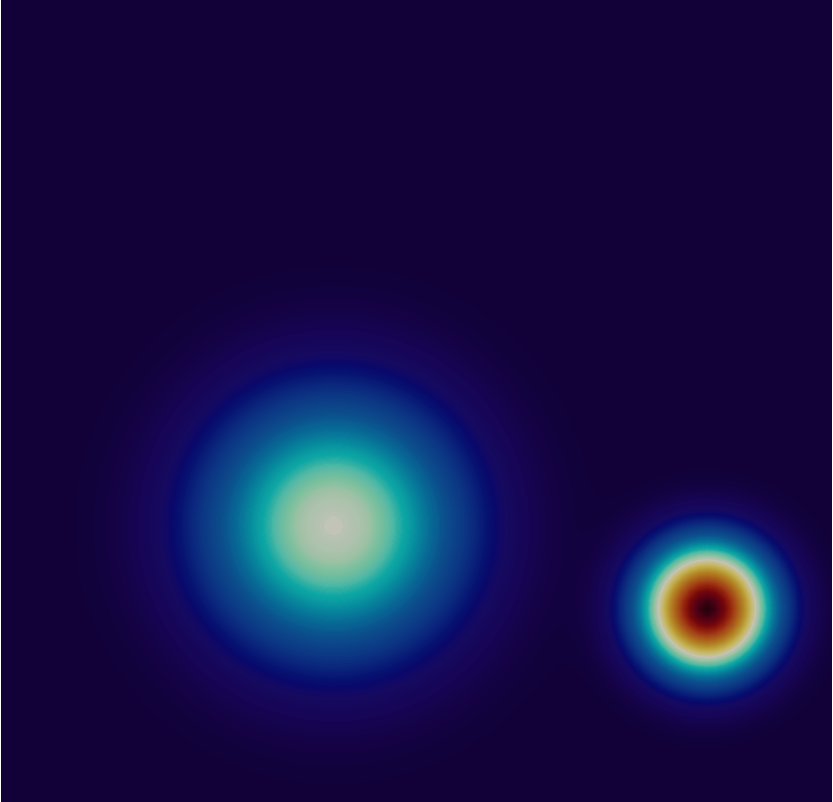}
    \includegraphics[width=0.15\linewidth]{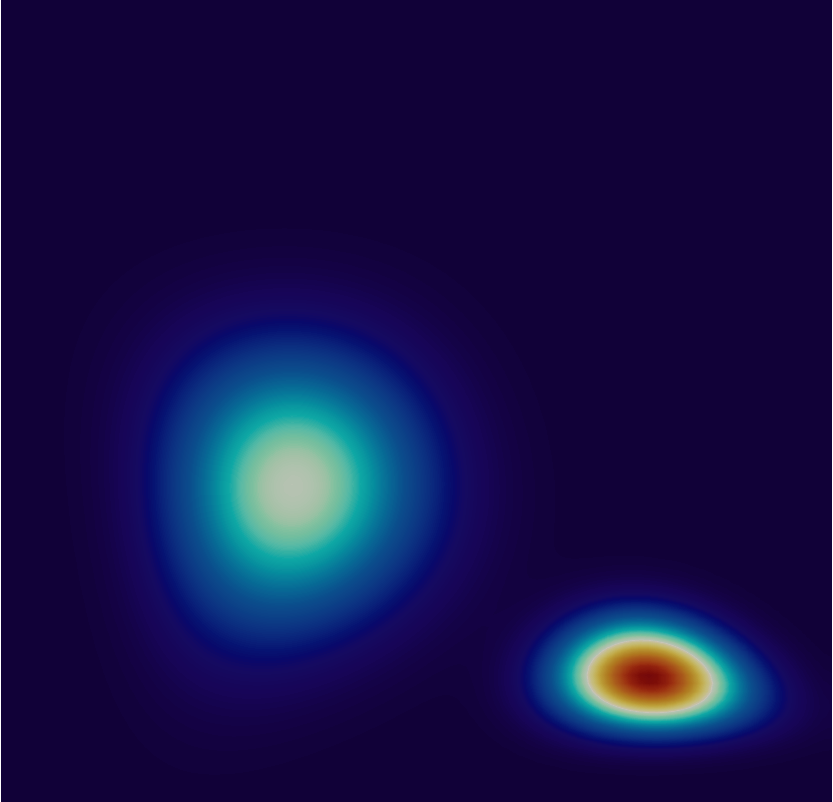}
    \includegraphics[width=0.15\linewidth]{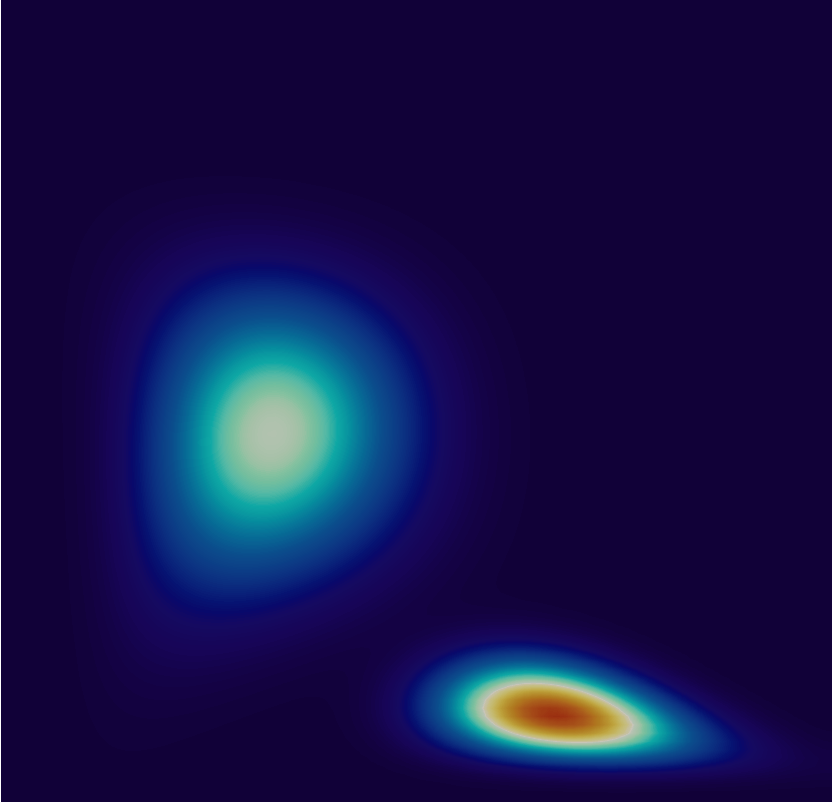}
    \includegraphics[width=0.15\linewidth]{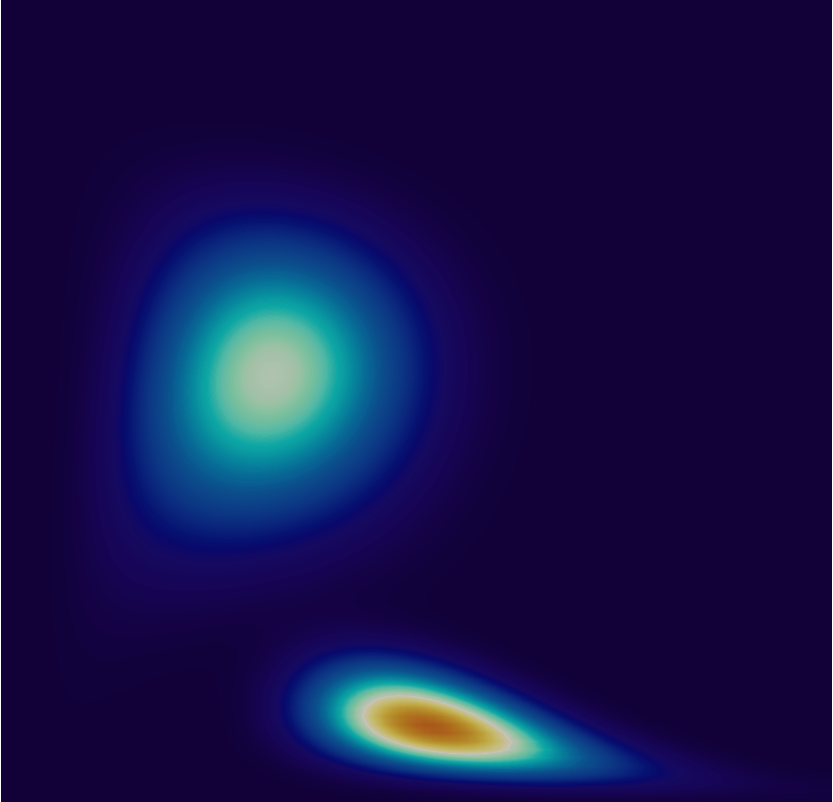}
    \includegraphics[width=0.15\linewidth]{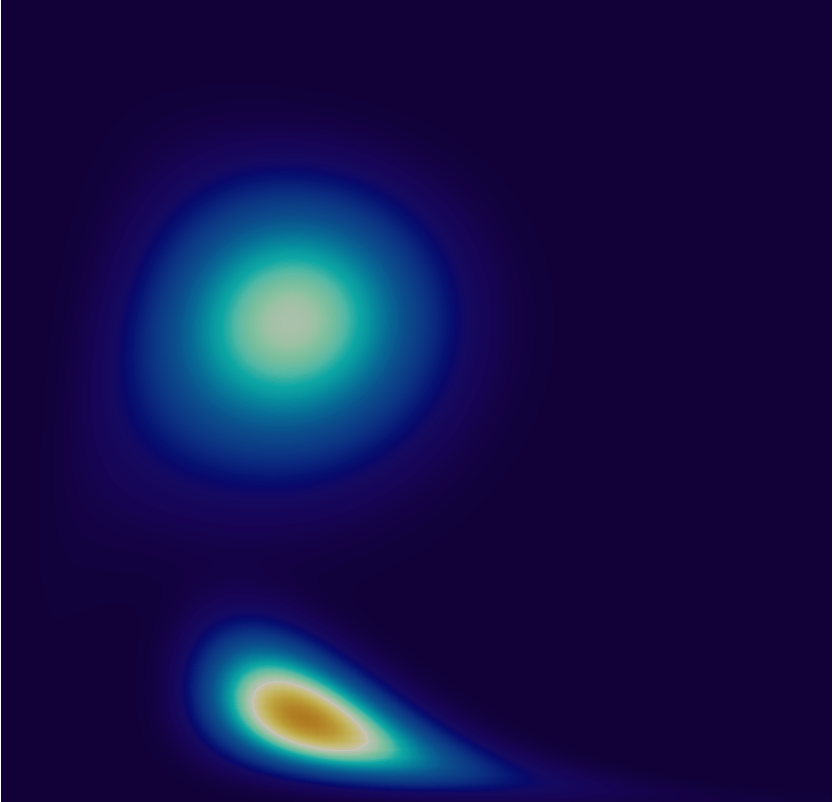}
    \includegraphics[width=0.15\linewidth]{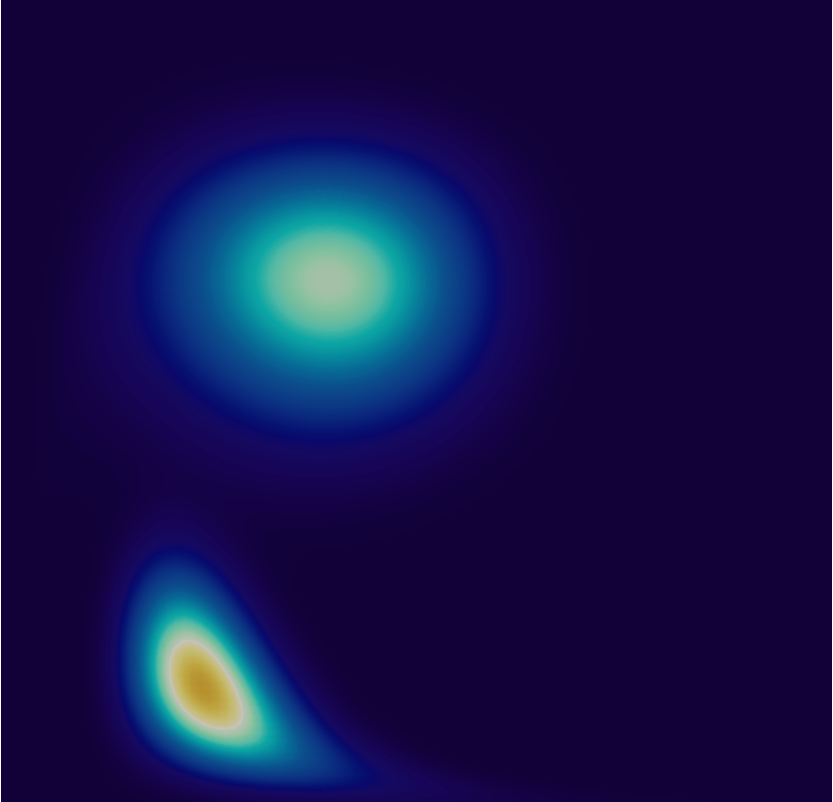}
    \includegraphics[width=0.15\linewidth]{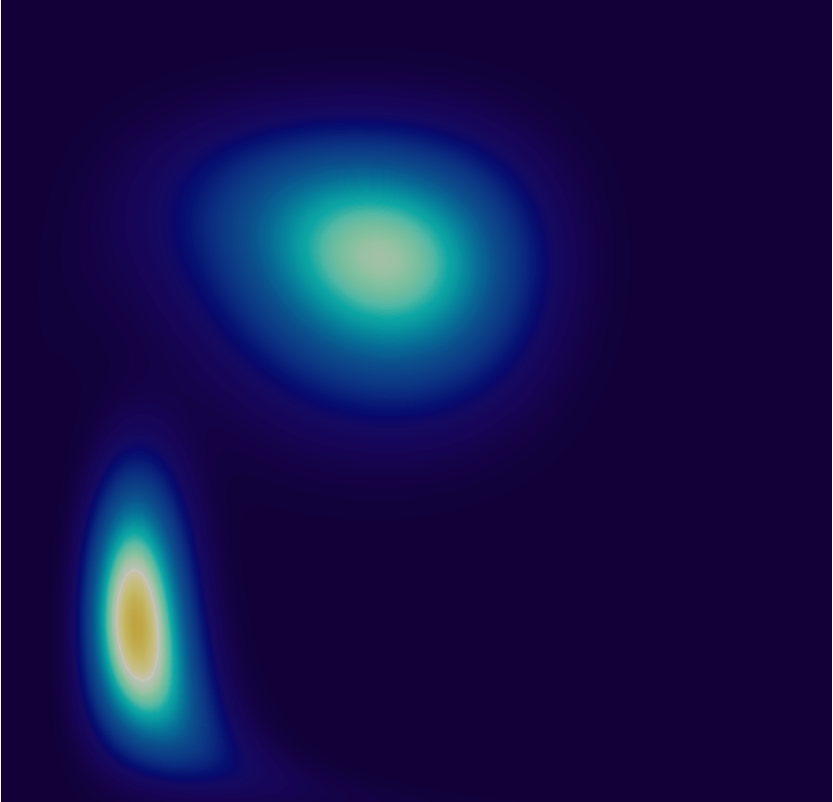}
    \includegraphics[width=0.15\linewidth]{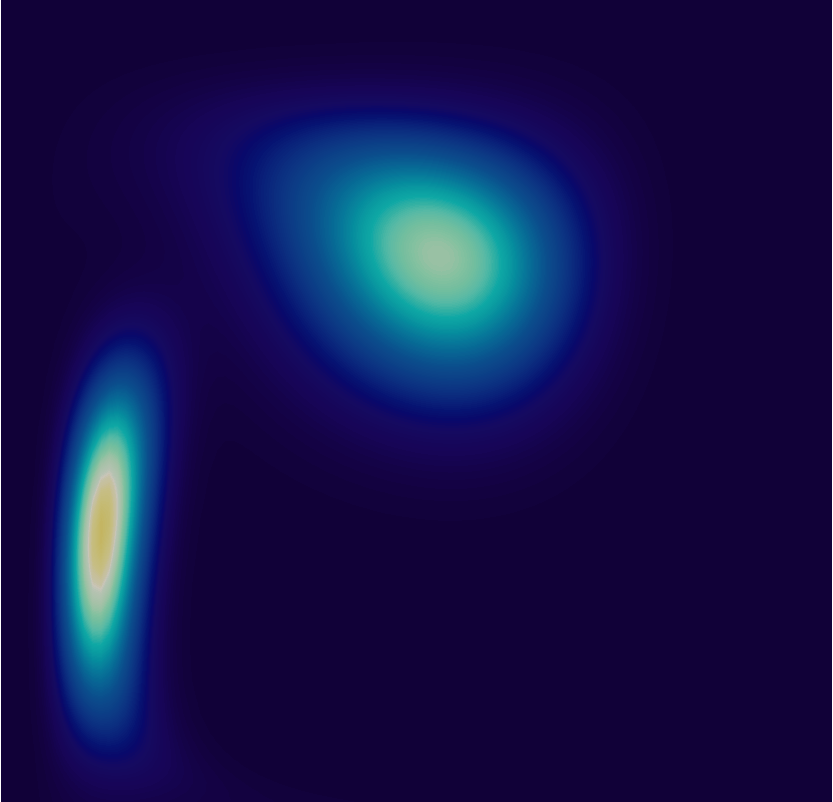}
    \includegraphics[width=0.15\linewidth]{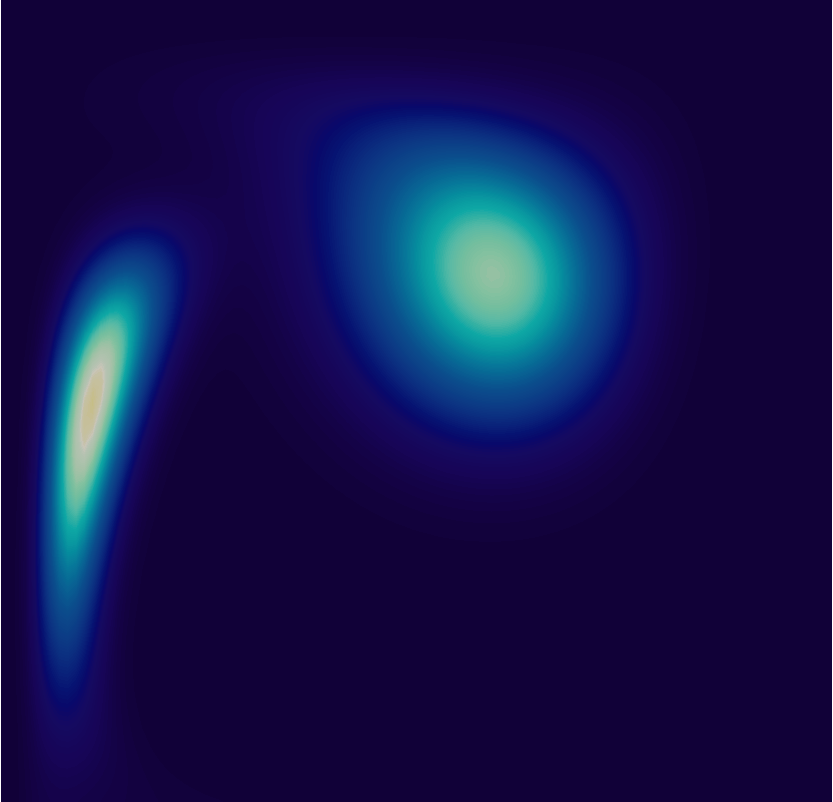}
    \includegraphics[width=0.15\linewidth]{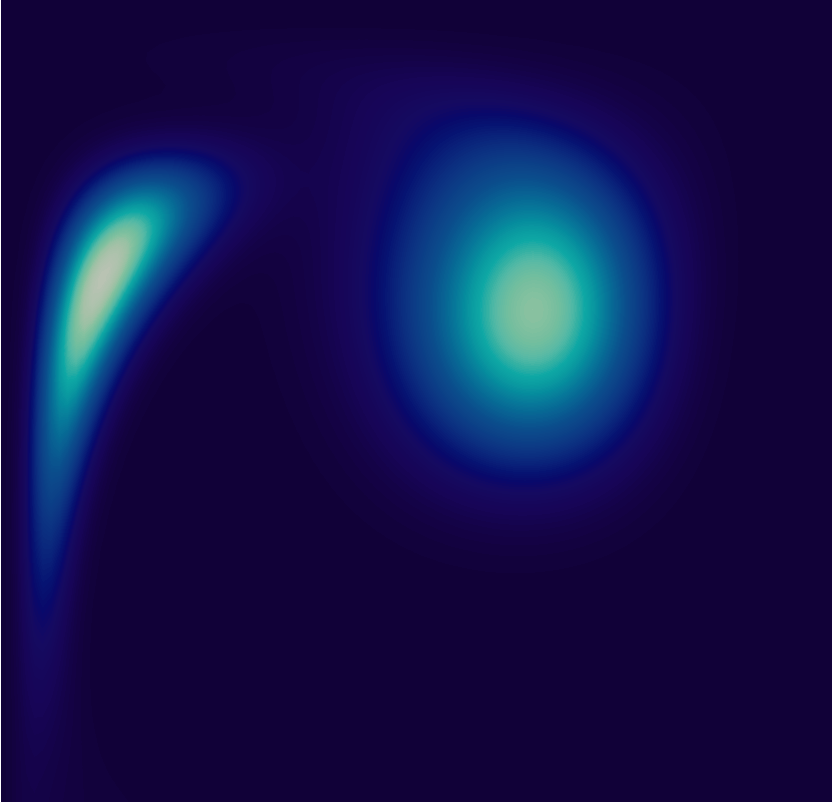}
    \includegraphics[width=0.15\linewidth]{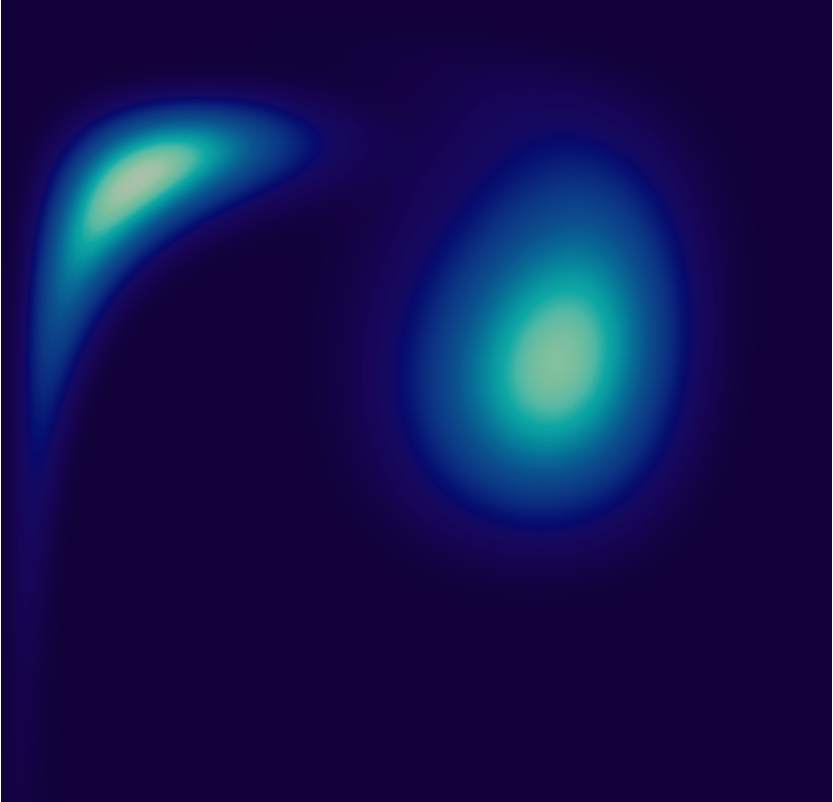}
    \caption{Snapshots of $c_h$. }
    \label{fig:c_snapshots}
\end{figure} 
Here we consider $n_T=10$, $f\equiv 0$, and the evolution operator $\Map{\ell}{\ell+1}$ advances the discrete system by 20 time steps for each $ 0 \le \ell \le n_T-1$. 

\subsubsection*{Background and model error}
We consider a Gaussian prior for the (discrete) initial condition $\bm c_0$ as $\mathcal{N}(\overline{\bm c}, \Gb)$. Following \cite{Stuart_2010, VillaPetraGhattas21},  we define the covariance matrix $\Gb$ using an elliptic operator as follows:
\begin{align} \label{eq:prior_2d}
\Gb = \left(\gamma\mathbf{K} + \delta\mathbf{N}\right)^{-1}\mathbf{N} \left(\gamma\mathbf{K} +\delta\mathbf{N}\right)^{-1},
\end{align}
where $\gamma = 2.70$ and $\delta = 2.5$ are parameters controlling variance and correlation length of the prior. To simulate model error,  we take $\displaystyle  \bm\eta_\ell \sim \mathcal{N}(\bm 0, \bm Q_{\ell})$, where $\bm{Q}_{\ell} = q^2_\ell \I$ and $q_\ell := 0.05\frac{1}{\sqrt{n_{s}}}  \| \Map{0}{\ell} \bm c^{\text{true}}_0 \|_2$ for $1  \le \ell \le n_T$.

\subsubsection*{EIG estimation}

We now evaluate the EIG for the Preconditioned formulation using SLQR and \xntrace. Note that for this example, we do not have the ground truth, so we do not report the errors but the statistics of the estimates.   
The number of Lanczos iterations is $70$, on average. The mean of the trace estimators,  the relative standard deviation (RSD), and the standard deviation (SD), is summarized in Table \ref{tab:estimator_comparison}.

\begin{table}[ht!]
\centering
\begin{tabular}{c|ccc|ccc}
\hline
\textbf{$N$} & \multicolumn{3}{c|}{\textbf{SLQ}} & \multicolumn{3}{c}{\textbf{\xntrace{}}} \\ 
\textbf{} & \textbf{Mean} & \textbf{RSD} & \textbf{SD} & \textbf{Mean} & \textbf{RSD} & \textbf{SD}\\ \hline
2 & $4.1612 \times 10^{5}$ & $8.83 \times 10^{-2}$\%  & 368& $3.9789 \times 10^{5}$ & $2.96 \times 10^{-2}$\% & 118\\ \hline
4 & $4.1612 \times 10^{5}$ &  $6.75 \times 10^{-2}$\% & 281& $3.9790 \times 10^{5}$ & $2.29 \times 10^{-2}$ \% & 91\\ \hline
8 & $4.1609 \times 10^{5}$ &  $5.15 \times 10^{-2}$\% & $\bm{214}$& $3.9788 \times 10^{5}$ & $1.56 \times 10^{-2}$\% & $\bm{62}$\\ \hline
\end{tabular}
\caption{Comparison of SLQ and \xntrace{} over 100 trials.} 
\label{tab:estimator_comparison}
\end{table}

\begin{figure}[!ht]  
  \centering
  \begin{subfigure}[b]{0.45\linewidth}
    \centering
    \includegraphics[width=\linewidth]{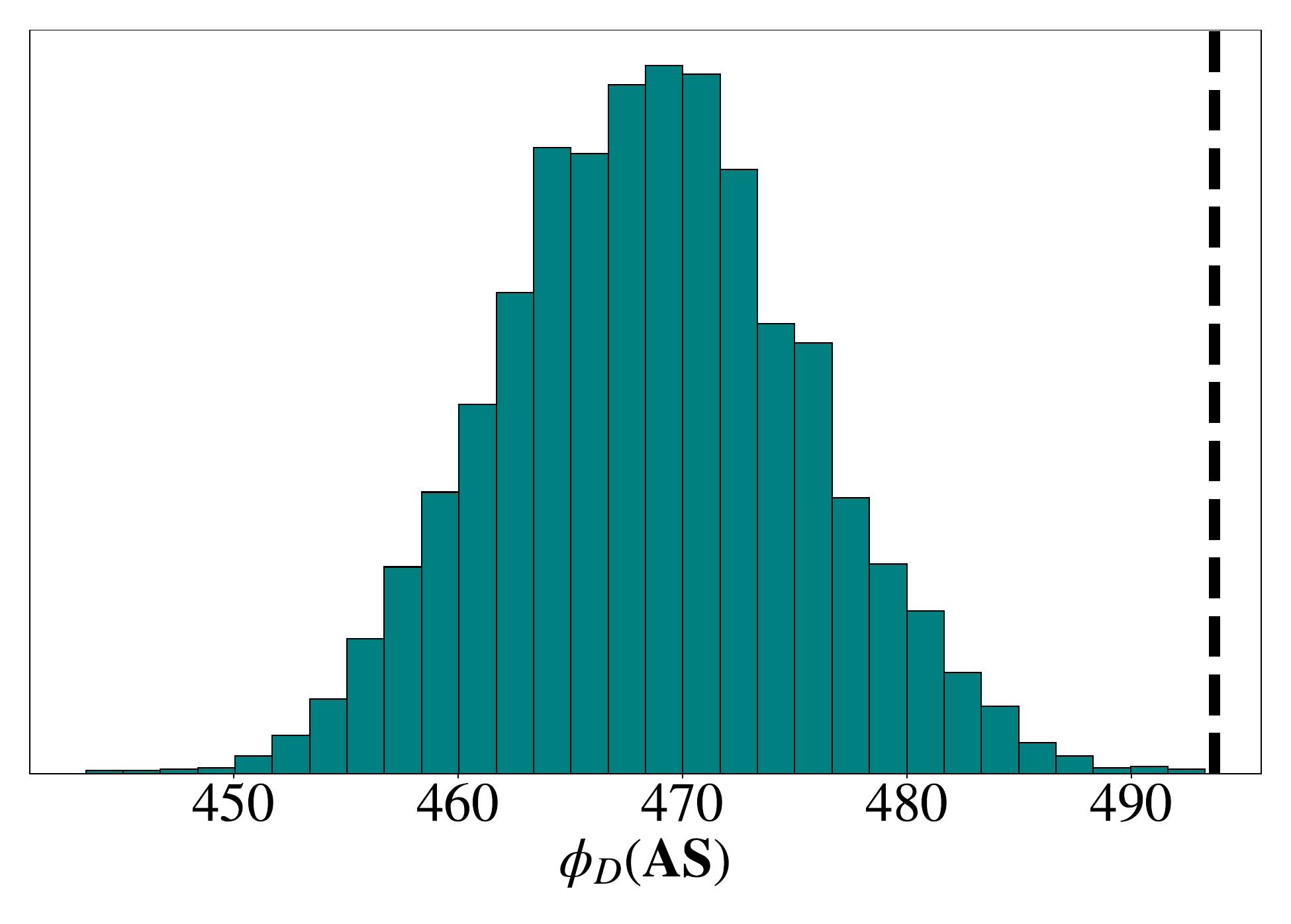}
    \caption*{Design values for $k=10$ sensors}
    \label{fig:k10}
  \end{subfigure}
  \hfill
  \begin{subfigure}[b]{0.45\linewidth}
    \centering
    \includegraphics[width=\linewidth]{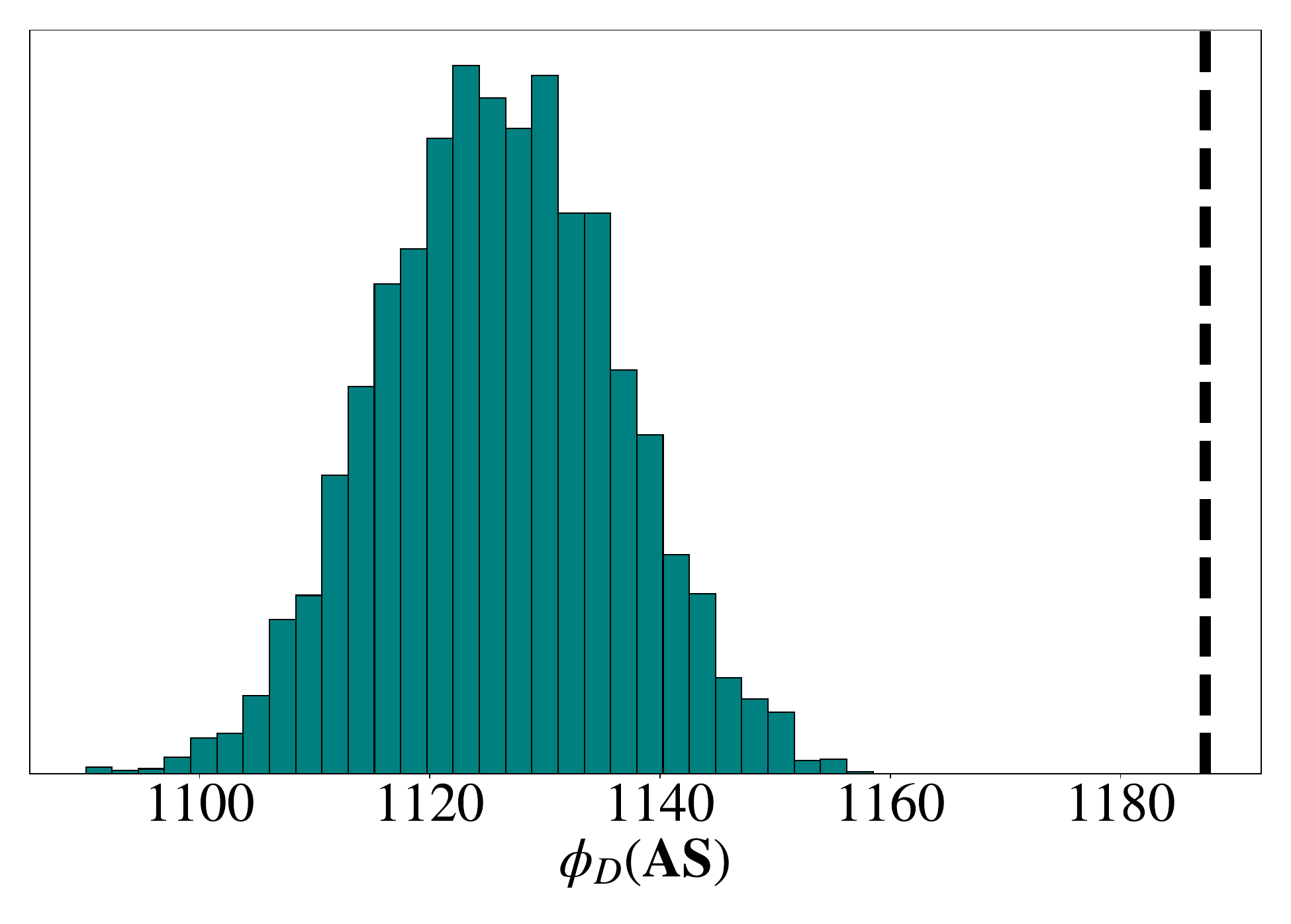}
    \caption*{Design values for $k=25$ sensors}
    \label{fig:k25}
  \end{subfigure}
  \caption{Comparison between the RAF-OED (dashed black) and the random designs (histogram). }
  \label{fig:CS2D}
\end{figure}

\subsubsection*{Sensor placement}

We consider the problem of selecting an optimal subset of $k$ sensors from a total of $n_s = 81$ candidate sensor locations uniformly distributed within a two-dimensional domain (cf. Figure~\ref{fig:v_field}). 

We choose $k = 10$ and $k=25$ sensors  out of $n_s = 81$. We use the RAF-OED approach as outlined in Section~\ref{sec:sensor_placement}.
We focus exclusively on RAF-OED, as the other methods would require storing all columns of the matrix 
$\A$, which becomes impractical for large-scale problems. 
For this application, we only compare against $5000$ random designs since an exhaustive search takes, e.g., $\binom{81}{10} \approx 1.8 \times 10^{12}$ combinations. Figure~\ref{fig:CS2D} displays the results of the comparison against random designs, and the sensor locations identified by RAF-OED are visualized in Figure~\ref{fig:sensor_locations}. As can be seen from the figure, the sensors selected by RAF-OED outperform the randomly selected designs for both cases $k=10$ and $25$). The results demonstrate the effectiveness of our proposed method in identifying near-optimal sensor placements.

\begin{figure}[!ht]
    \centering
    \begin{minipage}[b]{0.35\linewidth}
        \centering
        \includegraphics[width=0.9\linewidth]{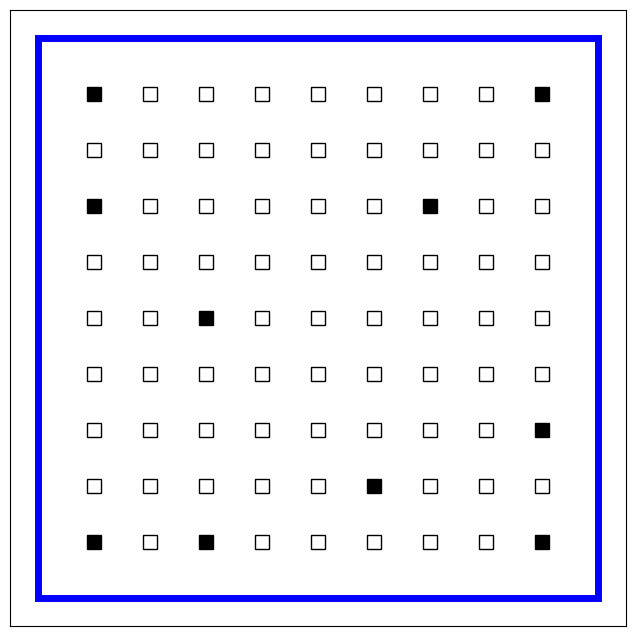}
        \subcaption*{Sensor locations $k=10$}
        \label{fig:best_10_sensors}
    \end{minipage}
    \hspace{2cm}
    \begin{minipage}[b]{0.35\linewidth}
        \centering
        \includegraphics[width=0.9\linewidth]{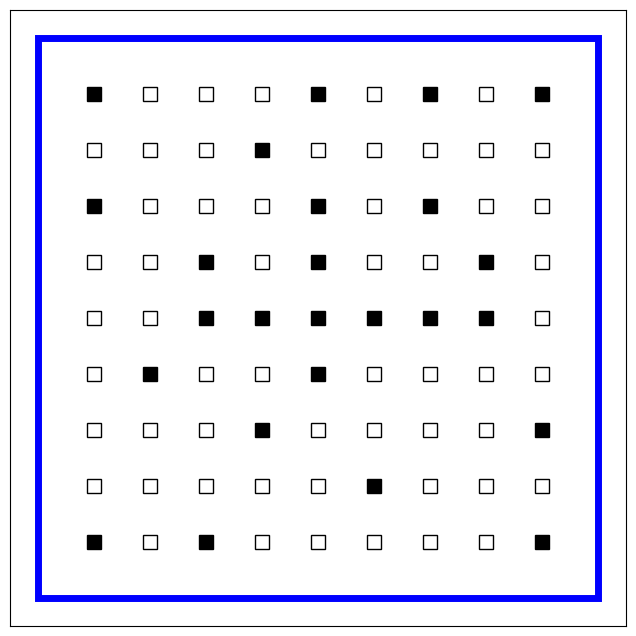}
        \subcaption*{Sensor locations $k=25$}
        \label{fig:best_25_sensors}
    \end{minipage}
    \caption{The sensor locations determined by RAF-OED for different $k$ values.}
    \label{fig:sensor_locations}
\end{figure}

\subsubsection*{Convergence of the EIG from WC to SC as model error decreases.}
We compare the EIG obtained from Weak-Constraint  and Strong-Constraint formulations as the model error decreases.  
We consider  $k=10$ sensors selected from $n_s=81$ candidates.  
Model error is simulated as $\bm\eta_\ell \sim \mathcal{N}(\bm 0, \bm Q_{\ell})$, where $\bm{Q}_{\ell} = q^2_\ell \I$ and 
$q_\ell := \alpha \tfrac{1}{\sqrt{n_{s}}} \| \Map{0}{\ell} \bm c^{\text{true}}_0 \|_2$ for $1 \le \ell \le n_T$.  
The parameter $\alpha$ controls the error magnitude.  
Table~\ref{tab:eig_wc_vs_sc} reports the SC-based EIG evaluated on the sensor sets selected by WC for different error levels.  
The SC-optimal design is independent of the model error. Since computing the exact SC-optimal design is infeasible, we approximate it using the GKS method, yielding
\[
\criteria^\text{SC}(\SSS{CSSP}^\text{SC}) \approx 170.32.
\]
In contrast, the WC design depends on the error size.  
As $\alpha$ decreases, the WC-based EIG values approach the SC reference value.  
This behavior is consistent with \Cref{proposition:bound_SCandWCoptCriteria,proposition:SCandWCoptCriteriaGap}.  

\begin{table}[htbp]
\centering
\begin{tabular}{lc}
\multicolumn{2}{c}{$ \criteria^\text{SC}(\mathbf{S}^\text{RGKS}_\text{WC})$} \\
\midrule
$\alpha = 5\%$ & $179.65$ \\
$\alpha = 3\%$ & $178.26$ \\
$\alpha = 2\%$ & $175.39$ \\
$\alpha = 1\%$ & $170.54$ \\
\end{tabular}
\caption{SC-based EIG values evaluated on sensor sets selected for the WC formulation.
$\mathbf{S}^\text{RGKS}_\text{WC}$ denotes the WC-selection obtained using the RAF-OED algorithm.}
\label{tab:eig_wc_vs_sc}
\end{table}


\section{Conclusions and Discussion}  
In this work, we introduced a novel approach to near-optimal sensor placement within the framework of OED, accounting for model errors through the WC4D-Var data assimilation technique.  

We developed an OED criterion for WC4D-Var based on the EIG and proved its convergence to the EIG corresponding SC4D-Var criterion as model error diminishes. We also demonstrated that the EIG-based optimal sensor selection performs at least as well as the SC4D-Var approach, with an explicit bound on the performance gap. We proposed several new reformulations of the criterion, and developed methods for computing them using stochastic trace estimators. 
We proposed algorithms based on column subset selection for sensor placement in time-dependent problems, along with a randomized, adjoint-free variant. 
These methods gave better optimal solutions compared to standard approaches, such as the greedy approach, while being more computationally efficient. Numerical experiments on one- and two-dimensional problems validated the effectiveness of both approaches, giving near-optimal solutions to the combinatorial sensor selection problem.

This work opens several promising directions for future research. A natural extension is to consider nonlinear dynamics. While our definition of the EIG can be used to define the OED criterion, it no longer has a closed form expression. There are many ways to address this computational challenge (see, e.g.,~\cite{Huan_Jagalur_Marzouk_2024}). Exploring other OED criteria such as A-optimality in the context of WC4D-Var is also of interest.

\bibliographystyle{abbrv}
\bibliography{refs}


\medskip
Received xxxx 20xx; revised xxxx 20xx; early access xxxx 20xx.
\medskip

\end{document}